\newcommand{\ba}{\begin{array}}
\newcommand{\ea}{\end{array}}
\newcommand{\be}{\begin{enumerate}}
\newcommand{\ee}{\end{enumerate}}
\newtheorem{thm}{Theorem}[section]
\newtheorem{prop}[thm]{Proposition}
\newtheorem{lemma}[thm]{Lemma}
\newtheorem{cor}[thm]{Corollary}
\newtheorem{conj}[thm]{Conjecture}
\theoremstyle{definition}
\newtheorem{defn}[thm]{Definition}
\theoremstyle{remark}
\newtheorem{rmk}[thm]{Remark}
\newtheorem{example}[thm]{Example}
\numberwithin{equation}{section}
\numberwithin{thm}{section}
\newcommand{\C}{\mathcal{C}}
\newcommand{\D}{\mathcal{D}}
\newcommand{\Z}{\mathbb{Z}}
\newcommand{\K}{\mathbf{k}}
\newcommand{\Ks}{\mathbf{k}^{\times}}
\newcommand{\Hom}{\operatorname{Hom}}
\newcommand{\End}{\operatorname{End}}
\newcommand{\lm}{\lambda}
\newcommand{\n}{\noindent}
\newcommand{\ot}{\otimes}
\newcommand{\bt}{\boxtimes}
\newcommand{\ra}{\rightarrow}
\newcommand{\xra}{\xrightarrow}
\newcommand{\mf}{\mathbf}
\newcommand{\op}{\operatorname}
\newcommand{\lan}{\langle}
\newcommand{\ran}{\rangle}
\newcommand{\ti}{\tilde}
\newcommand{\al}{\alpha}
\newcommand{\om}{\omega}
\newcommand{\tr}{R}
\newcommand{\de}{\delta}
\newcommand{\bo}{\boxplus}
\newcommand{\ta}{\tilde{A}}
\newcommand{\Ra}{\Rightarrow}
\newcommand{\cz}{\mathcal{Z}}
\newcommand{\cv}{\mathcal{V}}
\newcommand{\vc}{\operatorname{1Vec}}
\newcommand{\zv}{\mathcal{Z}(\operatorname{2Vec}_G^{\omega})}
\newcommand{\ve}{\operatorname{2Vec}}
\newcommand{\vgw}{\operatorname{2Vec}_G^{\omega}}
\newcommand{\rep}{\operatorname{2Rep}}
\newcommand{\rh}{\operatorname{2Rep}(C_G(h),\tau_h(\omega))}
\newcommand{\cgh}{C_G(h)}
\newcommand{\gh}{g^{-1}hg}
\newcommand{\cl}{\operatorname{Cl}}
\newcommand{\zb}{\mathcal{Z}(\mathcal{B})}
\newcommand{\cb}{\mathcal{B}}
\newcommand{\bic}{\operatorname{Bicat}}
\begin{document}

\title{The center of monoidal 2-categories in 3+1D Dijkgraaf-Witten Theory}
\author{Liang Kong}
\address{Shenzhen Institute for Quantum Science and Engineering, and Department of Physics, Southern University of Science and Technology, Shenzhen, 518055, China}
\email{kongl@sustech.edu.cn}
\author{Yin Tian}
\address{Yau Mathematical Sciences Center, Tsinghua University, Beijing 100084, China}
\email{yintian@mail.tsinghua.edu.cn}
\author{Shan Zhou}
\address{Department of Physics, University of California, Santa Barbara, CA 93106, USA}
\email{shan\_zhou@physics.ucsb.edu}
\maketitle

\begin{abstract}
In this work, for a finite group $G$ and a 4-cocycle $\omega \in Z^4(G,\K^\times)$, we compute explicitly the center of the monoidal 2-category $\vgw$ of $\omega$-twisted $G$-graded 1-categories of finite dimensional $\K$-vector spaces.
This center gives a precise mathematical description of the topological defects in the associated 3+1D Dijkgraaf-Witten TQFT.
We prove that this center is a braided monoidal 2-category with a trivial sylleptic center.
\end{abstract}




\section{Introduction}

The notion of the center of a monoidal 2-category was introduced long time ago \cite{BN,Cr,KV}.
As far as we know, there is, however, no explicit computation of the centers of any non-trivial monoidal 2-categories. In recent years, the demand for such computation from physics becomes paramount. In this work, we consider a very simple case motivated from the physics of 3+1D topological orders.

Let $\cv$ be the 1-category of finite dimensional $\K$-vector spaces (i.e. $\mathrm{1Vec}$).
The ground field $\K$ is assumed to be $\mathbb{C}$ throughout the paper.
Let $G$ be a finite group and $\omega \in Z^4(G,\K^\times)$ a 4-cocycle. Let $\vgw$ be the 2-category of $G$-graded 1-categories of finite semisimple $\cv$-module categories, equipped with a $\omega$-twisted monoidal structure, which makes $\vgw$ a non-strict monoidal 2-category.
The goal of this paper is to compute the center of $\vgw$ as a braided monoidal 2-category.
It can be viewed as 3+1D Dijkgraaf-Witten theory for a finite group $G$ \cite{DW}.

We give a definition of the center of monoidal 2-categories in Section \ref{sec:def-center}.
It is a weak version of Crans' definition of the center of semi-strict monoidal 2 categories \cite{Cr}.
Our first main result is that the center of a monoidal 2-category is a braided monoidal 2-category, see Theorem \ref{thm def zb}.
We further compute explicitly the center $\zv$ of the monoidal 2-category $\vgw$ in Section \ref{sec zv}.
Although any monoidal 2-category has a semi-strict model \cite{GPS}, it is more convenient for us to consider non semi-strict monoidal 2-categories $\vgw$ when we compute the center of $\vgw$.


The analogue on the level of 1-categories is known as the twisted Drinfeld double of a finite group $G$. Let $\operatorname{1Vec}_G^{\chi}$ be the $\chi$-twisted monoidal 1-category of $G$-graded finite dimensional $\K$-vector spaces for $\chi \in Z^3(G, \Ks)$.
Let $\cl$ be the set of conjugacy classes of $G$, and $\cgh$ be the centralizer of $h \in G$.
There is a the transgression map $\tau_h: C^{k+1}(G, \Ks) \ra C^k(\cgh, \Ks)$.
Willerton used it to give a geometric description of the twisted Drinfeld double, and showed that there is an equivalence of 1-categories:
$$\cz(\operatorname{1Vec}_G^{\chi}) \simeq \bigoplus_{[h] \in \cl} \op{1Rep}(\cgh, \tau_h(\chi)),$$
where $\op{1Rep}(\cgh, \tau_h(\chi))$ is the 1-category of representations of the central extension of $\cgh$ determined by the 2-cocycle $\tau_h(\chi)$ \cite{DPR, Wi}.
Our second result generalizes this from 1-categories to 2-categories.

\begin{thm} \label{thm main}
There is an equivalence of 2-categories:
$$
\zv \simeq \bo_{[h] \in \cl} \rh,
$$
where $\rh$ is the 2-category of right module categories over the monoidal 1-category $\operatorname{1Vec}_{\cgh}^{\tau_h(\om)}$.
\end{thm}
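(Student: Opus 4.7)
Our strategy generalizes Willerton's analysis \cite{Wi} from 1-categories to 2-categories. An object of $\zv$ is a pair $(X,\gamma)$ where $X = \bigoplus_{g\in G} X_g \in \vgw$ and $\gamma$ is a half-braiding: a coherent family of 1-isomorphisms $\gamma_Y\colon X\ot Y \xra{\simeq} Y\ot X$. Since the simple objects $V_g$ supported in a single degree $g\in G$ generate $\vgw$ as a 2-category, the datum $\gamma$ is determined by the $\gamma_{V_g}$ for $g\in G$. Comparing the degree-$k$ components on both sides of $\gamma_{V_g}\colon X\ot V_g \simeq V_g\ot X$ forces 1-equivalences $X_{kg^{-1}}\simeq X_{g^{-1}k}$ for every $k$, equivalently $X_h\simeq X_{ghg^{-1}}$ for all $g,h\in G$. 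The support of $X$ is therefore $G$-conjugation invariant, producing a block decomposition $\zv \simeq \bo_{[h]\in\cl} \zv_{[h]}$ indexed by conjugacy classes.

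Next, fix a representative $h\in [h]$. A choice of coset representatives for $\cgh\backslash G$ canonically recovers the components $X_{ghg^{-1}}$ from $X_h$ via transport along $\gamma$, so an object of $\zv_{[h]}$ is determined by $X_h\in \ve$ together with, for each $g\in \cgh$, a 1-auto-equivalence $a_g\colon X_h\simeq X_h$ (the restriction of $\gamma_{V_g}$ to degree $h$) and compatibility 2-isomorphisms $a_g\circ a_{g'}\cong a_{gg'}$. Higher coherence of the half-braiding, evaluated on tensors $V_{g_1}\ot V_{g_2}\ot V_{g_3}$, entangles these 2-isomorphisms with the pentagonator of $\vgw$ determined by $\omega$; the resulting constraint on the $a_g$ is precisely a $\tau_h(\om)$-projective categorical $\cgh$-action, where $\tau_h(\om)\in Z^3(\cgh,\Ks)$ is the transgression. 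Equivalently, $X_h$ carries the structure of a right module category over $\operatorname{1Vec}_{\cgh}^{\tau_h(\om)}$, which gives the object-level equivalence $\zv_{[h]} \simeq \rh$.

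Finally, the same degree argument applied to 1-morphisms forces 1-morphisms in $\zv$ between centres supported on distinct classes to vanish; on a single class $[h]$, naturality of the half-braiding in the generators $V_g$ reduces a 1-morphism to a functor between the associated $\cgh$-equivariant module categories, i.e.\ an $\operatorname{1Vec}_{\cgh}^{\tau_h(\om)}$-module functor, and 2-morphisms likewise become module natural transformations. Assembling these pieces yields the claimed equivalence of 2-categories. The main obstacle is the explicit identification in the second paragraph: unpacking the half-braiding coherence against the pentagonator of $\vgw$ and matching the resulting 3-cocycle on $\cgh$ with $\tau_h(\om)$, while checking independence from the choice of representative $h$ and of coset representatives for $\cgh\backslash G$ so that the block $\zv_{[h]}$ is well defined up to canonical equivalence.
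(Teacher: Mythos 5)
Your overall strategy coincides with the paper's: decompose $\zv$ into blocks indexed by conjugacy classes using the half-braiding with the simple objects $\de_g$, restrict the block of $[h]$ to the $h$-graded component, and identify the residual coherence data on that component as a $\tau_h(\om)$-twisted action of $\cgh$, i.e.\ an object of $\rh$. The first paragraph and the identification of the transgression $\tau_h(\om)$ in the second are correct and match the paper's computation (the constraint you describe is exactly Eq.\,(\ref{eq new key 2iso}) specialized to $x,y,z\in\cgh$).

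However, there is a genuine gap at the pivotal step: you assert that ``a choice of coset representatives for $\cgh\backslash G$ canonically recovers the components $X_{ghg^{-1}}$ from $X_h$ via transport along $\gamma$, so an object of $\zv_{[h]}$ is determined by'' the $\cgh$-action data on $X_h$. Transport along $\gamma$ only makes sense for an object that already lives in the center; it shows at best that the forgetful 2-functor $\Phi_h\colon \zv_{[h]}\to\rh$ does not lose information, not that it is an equivalence. The hard direction is essential surjectivity (and fullness on 1- and 2-morphisms): given an arbitrary object of $\rh$, one must \emph{construct} half-braidings $R_{h_i,g}$ and coherence 2-isomorphisms $\tr_{(h_i|g,g')}$ for \emph{all} $g,g'\in G$ and all conjugates $h_i$ of $h$, and verify the cocycle condition Eq$(h_i|g,g',g'')$ of (\ref{eq new key 2iso}) --- which involves the 4-cocycle $\om$ evaluated on general group elements, not just elements of $\cgh$. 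The paper does this by an explicit inductive construction (the 2-functor $\Psi_h$): after writing $g=xg_i$ with $x\in\cgh$ and $g_i$ a coset representative, it chooses the data in a prescribed order and then proves Lemma \ref{lem psi obj} using a five-term compatibility condition CC$(h_i|g,g',g'',g''')$ among instances of the axiom (\ref{def AXYZ}) on four tensor factors, which propagates the verified identities from $\cgh$ to all of $G$. You explicitly flag this as ``the main obstacle'' but do not carry it out, so the proposal as written does not establish the equivalence; it establishes only that $\Phi_h$ exists and is reasonable. Independence of the choices of $h$ and of coset representatives, which you also defer, is comparatively routine once $\Psi_h$ is in place, since $\Phi_h\circ\Psi_h=\mathrm{id}$ pins down the equivalence class.
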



The braided monoidal structure of $\zv$ will be explicitly described in Section \ref{sec:braided zv}.
We expect a similar generalization to $n$-categories.

\begin{conj}
For $\omega\in Z^{n+2}(G,\K^\times)$ and a properly defined notion of an $n$-category, we have an equivalence of $n$-categories:
$$
\mathcal{Z}(n\mathrm{Vec}_G^\omega) \simeq \bo_{[h] \in \cl} \,\, n\mathrm{Rep}(C_G(h), \tau_h(\omega)).
$$
\end{conj}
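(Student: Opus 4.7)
The plan is to prove this by induction on $n$, guided by the TQFT interpretation that this equivalence is the categorification of circle compactification in $(n+2)$-dimensional Dijkgraaf-Witten theory: $\mathcal{Z}(n\mathrm{Vec}_G^\omega)$ is the value on $S^1$, which decomposes over flat $G$-bundles on $S^1$ (equivalently, conjugacy classes of $G$), with the summand at $[h]$ being the DW theory for $\cgh$ with cocycle $\tau_h(\omega)$. The base cases $n=0,1,2$ are classical (trivial, Willerton, and Theorem \ref{thm main} respectively), so the content is the inductive step.

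First I would settle on a framework in which $n\mathrm{Vec}_G^\omega$, its monoidal structure, the center, and $n\mathrm{Rep}(\cgh,\tau_h(\omega))$ all have unambiguous definitions. A workable choice is fully weak $n$-categories: objects of $n\mathrm{Vec}_G^\omega$ are $G$-graded $(n-1)$-categories built inductively from $\cv$, the tensor product is convolution on $G$, and $\omega \in Z^{n+2}(G,\K^\times)$ enters as the top coherence datum of the associator tower via the bar resolution of $G$. The center $\mathcal{Z}(n\mathrm{Vec}_G^\omega)$ is defined, as in Section \ref{sec:def-center} but at height $n$, as the $n$-category of objects equipped with a half-braiding and the full tower of higher coherence data.

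Next I would execute the decomposition by conjugacy class. For a half-braided object $(X,\gamma)$, the 1-isomorphisms $\gamma_{\mathbb{C}_g}: X \ot \mathbb{C}_g \to \mathbb{C}_g \ot X$ against the simple invertible $G$-graded objects $\mathbb{C}_g$ identify the $h$-graded and $ghg^{-1}$-graded components of $X$, so the support of $X$ is a union of conjugacy classes; this yields the $\bo_{[h] \in \cl}$ splitting. Fixing a representative $h$, the data on the $h$-homogeneous piece $X_h$ reduce to: an $(n-1)$-category of vector spaces, together with a coherent $\cgh$-action assembled from $\gamma_{\mathbb{C}_g}$ for $g \in \cgh$. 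Unwinding all higher coherence constraints, the obstruction $(n+1)$-cocycle governing this action is $\omega$ evaluated on $(n+1)$-tuples of centralizer elements together with $h$, which is exactly the Willerton transgression $\tau_h(\omega) \in Z^{n+1}(\cgh,\K^\times)$; hence the summand is $n\mathrm{Rep}(\cgh,\tau_h(\omega))$. Naturality in the choice of representative follows from functoriality of $\tau_h$ under conjugation.

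The main obstacles are threefold. \textbf{(a) Foundations:} no single definition of weak $n$-category is universally accepted, so the theorem must be stated and proved inside a chosen model (fully weak, Gray-type semi-strict, or Lurie's $(\infty,n)$-categorical framework), and the comparison results needed to transfer between models are themselves nontrivial. \textbf{(b) Coherence bookkeeping:} propagating the full $\omega$-coherence tower through the half-braiding calculation and verifying that the resulting cocycle on $\cgh$ agrees on the nose with $\tau_h(\omega)$ is combinatorially intricate and grows rapidly with $n$; this is the principal obstacle and the reason the authors only conjecture the general statement. \textbf{(c) Semisimplicity and finiteness:} one needs an $n$-categorical analogue of ``finite semisimple'' that remains well-behaved under the center construction. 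A conceptually cleaner route that avoids (b) interprets $\mathcal{Z}(-)$ as factorization homology $\int_{S^1}(-)$, so the computation reduces to the decomposition $\mathcal{L}BG \simeq \coprod_{[h]} B\cgh$ with $\omega$ pulling back to $\tau_h(\omega)$ on each component; making this rigorous, however, requires fully dualizable $n$-categories and cobordism-hypothesis machinery at height $n$, which is itself a heavy prerequisite.
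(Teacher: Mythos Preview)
The statement you are addressing is a \emph{conjecture} in the paper, not a theorem: the authors do not give a proof, and indeed they explicitly phrase it as contingent on ``a properly defined notion of an $n$-category.'' So there is no proof in the paper to compare your proposal against.

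What you have written is a reasonable research outline, and its skeleton mirrors exactly the strategy the paper uses for $n=2$ (Theorem~\ref{thm main}): split by conjugacy class via the half-braiding against the $\de_g$, restrict to the $h$-component to get a weak $\cgh$-action, and read off the twisting cocycle as the transgression $\tau_h(\omega)$ from the top coherence axiom. Your obstacle list is honest and accurate; in particular (a) and (b) are precisely why the authors leave this as a conjecture. But you should be clear that what you have is not a proof --- it is a plan whose decisive steps (a rigorous definition of $n\mathrm{Vec}_G^\omega$ and its center, and the coherence calculation extracting $\tau_h(\omega)$ at arbitrary height) remain to be carried out. The factorization-homology route you mention at the end is likewise a plausible strategy rather than an argument, and it imports its own foundational load.
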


While we are preparing this paper, a beautiful work on the definition of a {\em fusion 2-category} by Douglas and Reutter appeared online \cite{DR}.
They introduced the notion of the 2-categorical {\em idempotent completion}, which is used to define that of 2-categorical {\em semisimpleness}.
Our result further confirms their definition.
In particular, $\zv$ is idempotent complete and semisimple.
We expect that it is a fusion 2-category.


We next discuss the sylleptic center of braided monoidal 2-categories which is a generalization of Crans' definition in the semi-strict case \cite{Cr} in Section \ref{sec muger}.
Our third result is that the sylleptic center of $\zv$ is trivial. Thus $\zv$ should be an example of the yet-to-be-defined {\em modular tensor 2-category}.

\begin{thm} \label{thm main2}
The sylleptic center of $\zv$ is equivalent to $\ve$ as 2-categories.
\end{thm}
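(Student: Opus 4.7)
The plan is to use the decomposition $\zv \simeq \bo_{[h] \in \cl} \rh$ from Theorem \ref{thm main} together with the explicit braided monoidal structure on $\zv$ described in Section \ref{sec:braided zv}, and then test the sylleptic center condition simple-object by simple-object. Recall that an object of the sylleptic center is a pair $(X, \sigma)$ where $X \in \zv$ and $\sigma = \{\sigma_Y\}_{Y \in \zv}$ is a coherent family of 2-isomorphisms trivializing the double braiding $R_{Y,X} \circ R_{X,Y} \Ra \mathrm{id}_{X \ot Y}$, natural in $Y$ and compatible with the tensor product on $\zv$.

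The first step is to show that the sylleptic center is concentrated in the $[e]$-component. Writing $X \simeq \bo_{[h]} X_{[h]}$ with $X_{[h]} \in \rh$ and taking $Y$ a simple object supported on some component $[k]$, the explicit braiding on $\zv$ forces the double braiding to act on conjugacy-class labels by $[h] \mapsto [khk^{-1}]$. When $[h] \ne [e]$ one may choose $k \notin \cgh$ so that this permutation is nontrivial, and then no 2-isomorphism $\sigma_Y$ to the identity can exist because the source and target objects already differ as simple summands of $\zv$. Hence $X_{[h]} = 0$ for all $[h] \ne [e]$, and the problem reduces to identifying the sylleptic center objects inside $\rep(G, \tau_e(\om))$.

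The second step is to test such an $X \in \rep(G, \tau_e(\om))$ against simple objects in every $[h]$-component. Now the double braiding fixes conjugacy-class labels on the nose, and the datum of $\sigma_Y$ amounts to a trivialization of an $h$-action on the right module category $X$ induced by the $\tau_h(\om)$-twist. Running $h$ through all conjugacy classes and invoking compatibility of $\sigma$ under $Y \ot Y'$ forces each such action to be coherently trivialized; this rigidifies $X$ to be equivalent to a direct sum of copies of the unit module category, so $X$ lies in $\ve$. The main obstacle is the cocycle bookkeeping in this step: one must verify that the coherence axioms on $\sigma$ genuinely obstruct every nontrivial $h$-action on $X$, which requires carefully tracking $\om$- and $\tau_h(\om)$-twists through the braiding hexagons to ensure that no anomalous phase produces an unexpected sylleptic object.
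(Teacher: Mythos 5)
There is a genuine gap in your first step. You claim that for $[h]\neq[e]$ one may choose $k\notin \cgh$ so that the double braiding permutes the graded components of $X_{[h]}\ot Y$ nontrivially. But such a $k$ exists only when $h$ is \emph{not central}: if $h\neq e$ is central then $\cgh=G$, the double braiding preserves every grading, and your permutation argument rules out nothing. In particular for abelian $G$ (e.g.\ $G=\Z_2$, where $\zv_s\simeq\rep(\Z_2)$ is a nontrivial component) your step 1 gives no reduction at all. The paper's argument avoids this by braiding against the regular representation $T=A(1,1,1)$ concentrated in grading $1$: since $R_{\ti{A},T}=\Sigma_{\ti{A},T}$ by (\ref{eq swap}), the sylleptic datum forces $R_{T,\ti{A}}$ to be $\Sigma_{T,\ti{A}}$ up to 2-isomorphism, and by (\ref{eq braid}) the latter involves the translation action $\rho_g$ of the grading $g$ of $A$ on $T=\vc_G$, which is a nontrivial permutation of the summands of $T$ for \emph{every} $g\neq e$, central or not. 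This is the step you are missing; testing only against objects in nontrivial gradings cannot detect central gradings.

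Beyond that, your second step stops short of the theorem. Showing that every sylleptic-center object is a sum of copies of the unit is not enough to conclude equivalence with $\ve$: one must also show that the sylleptic structure $v_{I,-}$ on $I$ is unique up to isomorphism (the paper does this by exhibiting an explicit 1-isomorphism $(id_I, v\cdot v'^{-1})$ between any two), and, crucially, that $\End(I_0)\simeq\cv$ rather than $\op{Rep}(G)=\End_{\zv}(I)$ — this is exactly what distinguishes $\ve$ from $\rep(G)$ and is obtained by imposing the 1-morphism condition (\ref{def Muger 1-mor}) against the objects $T_h$. Your proposal also leaves the "cocycle bookkeeping" (that the coherence axiom (\ref{def Muger}) forces the class $\psi$ of $A(1,G,\psi)$ to be a coboundary) as an acknowledged open point; in the paper this is a short computation from $R_{(\ti{A}|T_h,T_{h'})}\cdot v_{\ta,T_hT_{h'}}=v_{\ta,T_h}\cdot v_{\ta,T_{h'}}$, but as written your argument does not close it.
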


Our motivations of this work are threefold.

(1) It was proposed in \cite{LKW} that $\vgw$ describes an anomalous 2+1D topological order, and its gravitational anomaly (or its bulk) is a 3+1D topological order consisting of topological excitations precisely described by the braided monoidal 2-category $\zv$. The objects in $\zv$ represent string-like topological excitations, 1-morphisms represent particle-like topological excitations and 2-morphisms represent instantons. We compute $\zv$ explicitly and summarize the result in Theorem\,\ref{thm main}. It is also known that the low energy effective theory of this 3+1D topological order is the well-known 3+1D Dijkgraaf-Witten TQFT associated to $(G,\omega)$ \cite{DW}. Therefore, Theorem\,\ref{thm main} also classifies all topological defects in the 3+1D Dijkgraaf-Witten TQFT. In particular, the monoidal 1-category of endomorphisms of the vacuum (i.e. particle-like excitations) is equivalent to the category $\op{Rep}(G)$ of the representations of $G$. Moreover, Theorem\,\ref{thm main} provides an efficient way to detect the physical difference between two anomalous 2+1D topological orders $2\mathrm{Vec}_G^{\omega_1}$ and $2\mathrm{Vec}_G^{\omega_2}$ by measuring their gravitational anomalies (i.e. centers), which are different not only in their braiding structures but also on the level of 2-categories (see Example\,\ref{ex Z2Z2}).

(2) It is well-known that the topological excitations in a 2+1D topological order form a modular tensor 1-category (MTC). The 3+1D analogue of MTC, i.e. the yet-to-be-defined modular tensor 2-category, should include $\zv$ as an example. Our second motivation is to find the correct definition of a (braided) fusion 2-category and that of a modular tensor 2-category.  It is worthwhile to point out that, even in this simple case, in order to reveal the intertwined relation between the braidings and the 4-cocycle $\omega$, it is more convenient to work in the non semi-strict setting.


(3) Our third motivation is to find a categorification of conformal blocks by integrating a modular tensor 2-category over 2-dimensional manifolds via factorization homology (see a recent review \cite{AF} and references therein).
Douglas and Reutter constructed a state-sum invariant for 4-manifolds associated to any fusion 2-category.
We expect that the integration of $\zv$ is related to their invariant associated to $\vgw$.

This work is the first in a series of works on (braided) fusion 2-categories. Our long term goal is to develop a mathematical theory of modular tensor 2-categories and a physical theory of the condensations of topological excitations in 3+1D topological orders. For example, the forgetful functor $\zv \to \vgw$ is precisely the mathematical description of a physical condensation process. 

\medskip
\noindent {\bf Acknowledgement}: Thank the referee for many useful suggestions. Thank Christopher L. Douglas for correcting typos. Liang Kong is partially supported by Guangdong Innovative and Entrepreneurial Research Team Program (Grant No. 2016ZT06D348), by the Science, Technology and Innovation Commission of Shenzhen Municipality (Grant Nos. ZDSYS20170303165926217 and JCYJ20170412152620376), and by the NSFC grant No. 11971219. Yin Tian is partially supported by the NSFC grants No. 11601256 and 11971256. Shan Zhou is partially supported by Simons Foundation grant No. 488629.

\section{The center of monoidal 2-categories} \label{sec:def-center}
In this section, we give a definition of the center of monoidal 2-categories.
It is a weak version of Crans' definition of the center of semi-strict monoidal 2-categories in \cite{Cr}.
We use Gurski's definition of monoidal bicategories and braided monoidal bicategories \cite[Section 2.4]{Gur}.
We prove that the center of a monoidal 2-category is a braided monoidal 2-category in Theorem \ref{thm def zb}.

\medskip
\noindent {\bf Convention}: In this paper, a (braided) monoidal 2-category is defined as a (braided) monoidal bicategory in the sense of Gurski, such that its underlying bicategory is a 2-category, see \cite[Definitions 2.6]{GPS}.

\medskip
We briefly recall the notion of a monoidal bicategory which is defined as a tricategory with one object.
We refer the reader to \cite{Gur} for more detail on tricategories and the coherence.
For two bicategories $\cb, \cb'$, let $\bic(\cb, \cb')$ denote the bicategory of functors $\cb \ra \cb'$, natural transformations and modifications.

Let $\cb=(\cb, \ot, I, \bf{a}, \bf{l}, \bf{r}, \pi, \mu, \rho, \lm)$  be a monoidal 2-category.
It consists of the following data:
\be
\item $\cb$ is a 2-category, $\ot$ is the monoidal bifunctor in $\bic(\cb \times \cb,\cb)$, and $I$ is the tensor unit;
\item $\bf{a}$ is the adjoint equivalence in $\bic(\cb \times \cb \times \cb, \cb)$, consisting of a pair $a: (-\ot-)\ot- \ra -\ot(-\ot-)$ and its adjoint equivalence $a^*: -\ot(-\ot-) \ra (-\ot-)\ot-$;
\item $\bf{l}$ and $\bf{r}$ are the adjoint equivalences in $\bic(\cb, \cb)$, where $l: I \ot - \ra -$ and $r: - \ot I \ra -$;
\item $\pi$ is the invertible modification for $\bf{a}$, and $\mu,\rho,\lm$ are the invertible modifications for $\bf{a}, \bf{l}, \bf{r}$.
\ee
It satisfies certain axioms which are omitted here.
We define the center $\zb$ in three steps: (1) the 2-category; (2) the monoidal structure; and (3) the braiding.

\n {\bf Step 1: the 2-category $\zb$.}

{\bf Objects.} An object $\ta=(A, R_{A,-}, \tr_{(A|-,?)})$ consists of an object $A$ of $\cb$, an adjoint equivalence $R_{A,-}: A \ot - \ra - \ot A$ in $\bic(\cb, \cb)$, and an invertible modification $\tr_{(A|-,?)}$:
\begin{equation*}
\resizebox{0.4\displaywidth}{!}{
\xymatrix{
& (XA)Y \ar[r]^{a} \ar@{}[ddr]|{{\displaystyle \Downarrow} \tr_{(A|X,Y)}} & X(AY) \ar[dr]^{R_{A,Y}}  & \\
(AX)Y \ar[ur]^{R_{A,X}} \ar[dr]^{a} & & & X(YA) \\
& A(XY) \ar[r]^{R_{A, XY}} & (XY)A \ar[ur]^{a} &
}}\end{equation*}
such that the following axiom holds:
\begin{equation} \label{def AXYZ}
\resizebox{0.8\displaywidth}{!}{
\xymatrix{
 ((XA)Y)Z \ar[r]^{a} \ar[ddr]^{a} \ar@{}[dddr]|{{\displaystyle \cong}} & (X(AY))Z \ar[r]^{R_{A,Y}} \ar[dr]_{a} \ar@{}[dd]|{{\displaystyle \Downarrow} \pi} & (X(YA))Z \ar[r]^{a} \ar@{}[d]|{{\displaystyle \cong}} & X((YA)Z) \ar@{}[dd]|{{\displaystyle \Downarrow} \tr_{(A|Y,Z)}} \ar[r]^{a} & X(Y(AZ)) \ar[dd]^{R_{A,Z}} \\
  & & X((AY)Z) \ar[ur]_{R_{A,Y}} \ar[dr]^{a} & & \\
((AX)Y)Z \ar[uu]^{R_{A,X}} \ar[d]^{a} & (XA)(YZ) \ar[rr]^{a} \ar@{}[drr]|{{\displaystyle \Downarrow} \tr_{(A|X,YZ)}} & & X(A(YZ)) \ar[dr]^{R_{A,YZ}}   & X(Y(ZA)) \\
 (AX)(YZ) \ar[r]^{a} \ar[ur]^{R_{A,X}} & A(X(YZ)) \ar[rr]_{R_{A,X(YZ)}} &  & (X(YZ))A \ar[r]^{a} &  X((YZ)A) \ar[u]^{a} \\
 && {\displaystyle \parallel} && \\
 ((XA)Y)Z \ar[r]^{a} \ar@{}[dr]|{{\displaystyle \Downarrow} \tr_{(A|X,Y)}}  & (X(AY))Z \ar[r]^{R_{A,Y}}   & (X(YA))Z \ar[r]^{a} \ar@{}[dr]|{{\displaystyle \Downarrow} \pi} & X((YA)Z) \ar[r]^{a} & X(Y(AZ)) \ar[d]^{R_{A,Z}} \\
((AX)Y)Z \ar[u]^{R_{A,X}}   \ar[d]^{a}  \ar@/_3pc/[dd]_{a}  & ((XY)A)Z \ar[ur]^{a} \ar[rr]^{a} \ar@{}[drr]|{{\displaystyle \Downarrow} \tr_{(A|XY,Z)}} & & (XY)(AZ) \ar[ur]^{a} \ar[dr]^{R_{A,Z}} \ar@{}[r]|{{\displaystyle \cong}} &  X(Y(ZA)) \\
(A(XY))Z \ar[ur]^{R_{A,XY}} \ar[r]^{a} \ar@{}[dr]|{{\displaystyle \Downarrow} \pi} & A((XY)Z) \ar[d]^{a} \ar[rr]_{R_{A,(XY)Z}} \ar@{}[drr]|{{\displaystyle \cong}} & & ((XY)Z)A \ar[d]^{a} \ar[r]^{a} \ar@{}[dr]|{{\displaystyle \Uparrow} \pi} &  (XY)(ZA) \ar[u]^{a} \\
 (AX)(YZ) \ar[r]^{a} & A(X(YZ)) \ar[rr]_{R_{A,X(YZ)}} & & (X(YZ))A \ar[r]^{a} &  X((YZ)A) \ar@/_3pc/[uu]_{a},
}}\end{equation}
where the four isomorphisms ``$\cong$'' are those defining the naturality of $a$ in $\cb$.

{\bf 1-morphisms.} A 1-morphism $(f, R_{f,-}): (A, R_{A,-}, \tr_{(A|-,?)}) \ra (A', R_{A',-}, \tr_{(A'|-,?)})$ consists of a 1-morphism $f: A \ra A'$ in $\cb$, and an invertible modification $R_{f,-}$:
$$\xymatrix{
A'X \ar[r]^{R_{A',X}} & XA'\\
AX \ar[u]^{f} \ar[r]_{R_{A,X}}  \ar@{}[ur]|{\Rightarrow R_{f,X}} &   XA \ar[u]_{f}
}$$
such that the following diagram commutes:
\begin{equation} \label{def 1-mor}
\xymatrix{
(XA')Y \ar[rrrrr]^{a}    & &&& & X(A'Y) \ar[dl]^{R_{A',Y}}  \\
& (A'X)Y \ar[ul]^{R_{A',X}} \ar[r]^{a}  & A'(XY) \ar@{}[ur]|{\Downarrow \tr_{(A'|X,Y)}} \ar[r]^{R_{A',XY}}  & (XY)A' \ar[r]^{a}  & X(YA') & \\
(XA)Y \ar@{}[ur]|{{\displaystyle \Leftarrow} R_{f,X}} \ar@{-->}[rrrrr]  \ar[uu] & &&& & X(AY) \ar[dl]^{R_{A,Y}} \ar[uu] \ar@{}[ul]|{{\displaystyle \Leftarrow} R_{f,Y}} \\
& (AX)Y  \ar[ul]^{R_{A,X}} \ar[r]^{a} \ar[uu] \ar@{}[uuur]|{\cong} & A(XY) \ar[r]^{R_{A,XY}} \ar[uu] \ar@{}[uuur]|{{\displaystyle \Rightarrow} R_{f,XY}} \ar@{}[ur]|{\Downarrow \tr_{(A|X,Y)}} & (XY)A \ar[r]^{a} \ar[uu] \ar@{}[uuur]|{\cong} & X(YA)  \ar[uu] & \\
}
\end{equation}
where all vertical arrows are 1-morphisms induced by $f: A \ra A'$ in $\cb$.

{\bf 2-morphisms.} A 2-morphism $\alpha: (f, R_{f,-}) \Ra (f', R_{f',-})$ is a 2-morphism $\al: f \Ra f'$ in $\cb$
such that $\al \cdot R_{f,-}=R_{f',-} \cdot \al$, i.e. the following diagram commutes:
\begin{equation} \label{def 2-mor}
\xymatrix{
A'X \ar[rrr]^{R_{A',X}} & & & XA' \\
AX \ar[rrr]_{R_{A,X}} \ar@/^1.5pc/[u]^{f}  \ar@{-->}@/_1.5pc/[u]_{f'} \ar@{}[u]|{\Rightarrow \al} & & & XA \ar@/^1.5pc/[u]^{f}  \ar@/_1.5pc/[u]_{f'} \ar@{}[u]|{\Rightarrow \al}
}
\end{equation}
where the 2-isomorphisms in the front and back are $R_{f,X}$ and $R_{f',X}$, respectively.


{\bf Composition of 1-morphisms.} Given two 1-morphisms $(f, R_{f,-})$ and $(g, R_{g,-})$, the composition
$$(g, R_{g,-}) \circ (f, R_{f,-})=(gf, R_{gf,-}),$$
where $gf$ is the composition in $\cb$, and $R_{gf,-}$ is given by the following composition of 2-morphisms:
$$\xymatrix{
A''X \ar[r]^{R_{A'',X}}  & XA'' \\
A'X \ar[r]_{R_{A',X}} \ar[u]^{g} \ar@{}[ur]|{\Rightarrow R_{g,X}} & XA' \ar[u]_{g} \\
AX \ar[u]^{f} \ar[r]_{R_{A,X}}  \ar@{}[ur]|{\Rightarrow R_{f,X}} &   XA. \ar[u]_{f}
}$$

\begin{rmk} \label{rmk unit}
The main difference between our definition and Crans' definition is that we are working with monoidal 2-categories instead of semi-strict monoidal 2-categories.
The monoidal structure of $\vgw$ is not semi-strict when $\om$ is nontrivial.
Although any monoidal 2-category has a semi-strict model \cite{GPS}, we do not know how to compute the center of the semi-strict model of $\vgw$ directly.
On the other hand, by working with non semi-strict associators, we can see explicitly the relation between the braidings and the associators (see Diagram\,(\ref{def AXYZ}) and Eq.\,(\ref{eq new key 2iso})). This relation affects not only the braiding structure but also the objects of the center.
More precisely, the underlying 2-categories of the center of $\vgw$ could be inequivalent for different classes $\om$, see Example \ref{ex Z2Z2}.
\end{rmk}

\vspace{.2cm}
\n {\bf Step 2: the monoidal structure.}
We construct a monoidal 2-category $(\zb, \ot, \tilde{I}, \tilde{\mf{a}}, \tilde{\mf{l}}, \tilde{\mf{r}}, \tilde{\pi}, \tilde{\mu}, \tilde{\lm}, \tilde{\rho})$.

Tensor product of two objects $(A, R_{A,-}, \tr_{(A|-,?)}) \ot (B, R_{B,-}, \tr_{(B|-,?)})=(AB, R_{AB,-}, R_{(AB|-,?)}),$ where $R_{AB,-}$ is an adjoint equivalence given by the composition:
$$(AB)- \xra{a} A(B-) \xra{R_{B,-}} A(-B) \xra{a^*} (A-)B \xra{R_{A,-}} (-A)B \xra{a} -(AB),$$
and $R_{(AB|-,?)}$ is an invertible modification:
\begin{equation} \label{def ABXY}
\resizebox{\displaywidth}{!}{
\xymatrix{
 ((AB)X)Y \ar@{}[drr]|{=} \ar[d]^{a} \ar[rr]^{R_{AB,X}} \ar@/_3pc/[dddd]_{a} &  & (X(AB))Y \ar[rrr]^{a} &&& X((AB)Y) \ar@{}[dr]|{=} \ar[dl]^{a} \ar[rr]^{R_{AB,Y}} & & X(Y(AB))  \\
(A(BX))Y \ar[r]^{R_{B,X}} \ar[d]^{a}_{\pi \Leftarrow}  \ar@{}[dr]|{\cong} &  (A(XB))Y \ar[r]^{a^{*}} \ar[d]^{a}  \ar@{}[dr]|{\Rightarrow \pi} & ((AX)B)Y \ar[d]^{a} \ar@{}[dr]|{\cong} \ar[r]^{R_{A,X}} & ((XA)B)Y \ar[ul]^{a} \ar[d]^{a} \ar@{}[r]|{\Downarrow \pi} & X(A(BY))  \ar[r]^{R_{B,Y}} \ar@{}[dr]|{\cong} & X(A(YB))  \ar[r]^{a^{*}} \ar@{}[dr]|{\Leftarrow \pi} & X((AY)B) \ar[r]^{R_{A,Y}} \ar@{}[dr]|{\cong} & X((YA)B) \ar[u]^{a} \\
A((BX)Y) \ar[r]^{R_{B,X}} \ar[d]^{a} & A((XB)Y)  \ar[dr]^{a}   & (AX)(BY) \ar[d]^{a} \ar[dr]^{R_{B,Y}} \ar[r]^{R_{A,X}} & (XA)(BY)  \ar@{=}[r] \ar@{}[dr]|{\cong} & (XA)(BY) \ar[u]^{a} \ar[r]^{R_{B,Y}} & (XA)(YB) \ar[u]^{a} & (X(AY))B \ar[u]^{a} \ar[r]^{R_{A,Y}} \ar[u]^{a} & (X(YA))B \ar[u]^{a}_{\Rightarrow \pi}  \\
A(B(XY)) \ar[drr]_{R_{B,XY}} & \Downarrow \tr_{(B|X,Y)} & A(X(BY)) \ar@{}[r]|{\cong} \ar[dr]^{R_{B,Y}} & (AX)(YB) \ar[d]^{a} \ar@{=}[r] & (AX)(YB) \ar@{}[r]|{\cong} \ar[ur]^{R_{A,X}} & ((XA)Y)B \ar[ur]^{a} \ar[u]^{a} & \Downarrow \tr_{(A|X,Y)} & ((XY)A)B \ar[d]^{a} \ar[u]^{a} \\
(AB)(XY) \ar[u]^{a} \ar@/_3pc/[rrrrrrr]_{R_{AB,XY}} & & A((XY)B) \ar[r]^{a} \ar@/_0.7pc/[rrr]^{a^*} & A(X(YB)) \ar@{}[ur]|{\Uparrow \pi} \ar@{}[dr]|{=}  & ((AX)Y)B \ar[u]^{a} \ar[r]^{a} \ar[ur]^{R_{A,X}}& (A(XY))B \ar[urr]_{R_{A,XY}} & & (XY)(AB). \ar@/_3pc/[uuuu]_{a} \\
&&&&&&&
}}
\end{equation}

Tensor product of an object $\ta=(A, R_{A,-}, \tr_{(A|-,?)})$ and a 1-morphism $(g, R_{g,-}): (B, R_{B,-}, \tr_{(B|-,?)}) \ra (B', R_{B',-}, \tr_{(B'|-,?)})$ is a 1-morphism $(Ag, R_{Ag,-}): \ta \ti{B} \ra \ta \ti{B'}$, where $Ag: AB \ra AB'$ is the 1-morphism in $\cb$, and $R_{Ag,-}$ is an invertible modification defined by the following diagram:
$$\xymatrix{
(AB')X \ar[r]^{a}  & A(B'X) \ar[r]^{R_{B',X}} & A(XB') \ar[r]^{a^*} & (AX)B' \ar[r]^{R_{A,X}} & (XA)B' \ar[r]^{a} & X(AB') \\
(AB)X \ar[r]_{a} \ar[u] \ar@{}[ur]|{\cong} & A(BX) \ar[r]_{R_{B,X}} \ar[u] \ar@{}[ur]|{R_{g,X}} & A(XB) \ar[r]_{a^*} \ar[u] \ar@{}[ur]|{\cong} & (AX)B \ar[r]_{R_{A,X}} \ar[u] \ar@{}[ur]|{\cong} & (XA)B \ar[r]_{a} \ar[u] \ar@{}[ur]|{\cong} & X(AB) \ar[u]
}$$
where all vertical arrows are 1-morphisms induced by $g$.

Tensor product of a 1-morphism $(f, R_{f,-}): \ta \ra \ta'$ and an object $\ti{B}$ is a 1-morphism $(fB, R_{fB,-}): \ta \ti{B} \ra \ta' \ti{B}$, where $fB: AB \ra A'B$ is the 1-morphism in $\cb$, and $R_{fB,-}$ is an invertible modification:
$$\xymatrix{
(A'B)X \ar[r]^{a}  & A'(BX) \ar[r]^{R_{B,X}} & A'(XB) \ar[r]^{a^*} & (A'X)B \ar[r]^{R_{A',X}} & (XA')B \ar[r]^{a} & X(A'B) \\
(AB)X \ar[r]_{a} \ar[u] \ar@{}[ur]|{\cong} & A(BX) \ar[r]_{R_{B,X}} \ar[u] \ar@{}[ur]|{\cong} & A(XB) \ar[r]_{a^*} \ar[u] \ar@{}[ur]|{\cong} & (AX)B \ar[r]_{R_{A,X}} \ar[u]  \ar@{}[ur]|{R_{f,X}} & (XA)B \ar[r]_{a} \ar[u] \ar@{}[ur]|{\cong} & X(AB) \ar[u]
}$$
where all vertical arrows are 1-morphisms induced by $f$.

The unit $\tilde{I}=(I, R_{I,-}, \tr_{(I|-,?)})$, where $R_{I,-}$ is an adjoint equivalence $I- \xra{l} - \xra{r^*} -I$, and $\tr_{(I|-,?)}$ is an invertible modification:
\begin{equation} \label{diag:unit-1}
\xymatrix{
& XY \ar[r]^{r^*} \ar@{=}[ddr] \ar@{}[dd]|{\Downarrow \lm} & (XI)Y \ar[r]^{a} \ar@{}[ddr]|{\Downarrow \mu} & X(IY)  \ar[r]^{l} & XY \ar@{=}[ddl] \ar[dr]^{r^*} \ar@{}[dd]|{\Downarrow \rho} & \\
(IX)Y \ar[ur]^{l} \ar[dr]^{a} & & & & & X(YI) \\
& I(XY) \ar[r]^{l} & XY \ar@{=}[r] & XY \ar[r]^{r^*} & (XY)I \ar[ur]^{a} &
}
\end{equation}

An associator $\ti{a}: (\ti{A} \ti{B}) \ti{C} \ra \ti{A}(\ti{B}\ti{C})$ is a 1-morphism $(a, R_{a,-})$, where $a: (AB)C \ra A(BC)$ is the associator in $\cb$, and $R_{a,-}$ is an invertible modification:
\begin{equation} \label{def ABCX}
\resizebox{0.85\displaywidth}{!}{
\xymatrix{
 (A(BC))X \ar[rrrrr]^{R_{A(BC),X}} \ar[d]^{a} & & \ar@{}[dr]|{=} & & & X(A(BC))  \\
 A((BC)X) \ar[d]^{a} \ar[rrr]^{R_{BC,X}} & \ar@{}[dr]|{=} & & A(X(BC)) \ar[r]^{a^*} & (AX)(BC) \ar[r]^{R_{A,X}} & (XA)(BC) \ar[u]^{a}  \\
 A(B(CX)) \ar[r]^{R_{C,X}} & A(B(XC)) \ar[r]^{a^*} & A((BX)C) \ar[r]^{R_{B,X}} & A((XB)C) \ar[u]^{a} & &  \\
 & & (A(BX))C \ar[r]^{R_{B,X}} \ar[u]^{a} \ar@{}[ur]|{\cong} & (A(XB))C \ar[r]^{a^*} \ar[u]^{a} \ar@{}[uur]|{\Rightarrow \pi} & ((AX)B)C \ar[r]^{R_{A,X}} \ar@{}[uur]|{\cong} \ar[uu]^{a} & ((XA)B)C \ar[d]^{a} \ar[uu]^{a}   \\
  (AB)(CX) \ar@{}[uur]|{\cong} \ar[uu]^{a} \ar[r]^{R_{C,X}} & (AB)(XC) \ar[r]^{a*} \ar[uu]^{a} \ar@{}[uur]|{\Leftarrow \pi} & ((AB)X)C \ar[u]^{a} \ar[rrr]^{R_{AB,X}} \ar@{}[dr]|{=} & \ar@{}[ur]|{=} & &  (X(AB))C \ar[d]^{a} \\
((AB)C)X \ar[u]^{a} \ar@/^3pc/[uuuuu]^{a}_{\Rightarrow \pi} \ar[rrrrr]_{R_{A(BC),X}}  &&&&&   X((AB)C) \ar@/_3pc/[uuuuu]_{a}^{\pi \Leftarrow}
}}
\end{equation}

An equivalence $\ti{l}: \ti{I} \ti{A} \ra \ti{A}$ is a 1-morphism $(l, R_{l,-})$, where $l: IA \ra A$ is the equivalence in $\cb$, and $R_{l,-}$ is an invertible modification:
\begin{equation} \label{def Rl}
\xymatrix{
 &  AX \ar[rrrr]^{R_{A,X}} & & & & XA \\
& I(AX) \ar[u]^{l} \ar[r]^{R_{A,X}} \ar@{}[l]|{\Rightarrow \lm} & I(XA) \ar[rr]^{l} \ar[d]^{a^*} \ar@{}[ur]|{\cong} \ar@{}[dr]|{\Uparrow \lm}  & & XA \ar[d]^{r^*} \ar@{=}[ur] \ar@{}[r]|{\Uparrow \mu}  &  \\
  & (IA)X \ar@/^4pc/[uu]^{l} \ar@{}[ur]|{=} \ar@/_2pc/[rrrr]^{R_{IA,X}} \ar[u]^{a} & (IX)A \ar[urr]^{l} \ar[rr]^{R_{I,X}} & & (XI)A \ar[r]^{a} \ar@{}[ul]|{=} & X(IA) \ar[uu]^{l}
}\end{equation}

An equivalence $\ti{r}: \ti{A}\ti{I}  \ra \ti{A}$ is a 1-morphism $(r, R_{r,-})$, where $r: AI \ra A$ is the equivalence in $\cb$, and $R_{r,-}$ is an invertible modification:
\begin{equation} \label{diag:unit-3}
\xymatrix{
 &  AX\ar@{=}[rr] & &AX \ar[r]^{R_{A,X}} \ar@/_.8pc/[dl]_{r^*} \ar@{}[dd]|{\cong} & XA \\
& A(IX) \ar@{}[ur]|{=} \ar[u]^{l} \ar[r]^{R_{I,X}} \ar@{}[l]|{\Leftarrow \mu} & A(XI)  \ar@{}[ur]|{\Leftarrow \rho} \ar[d]_{a^*}  &   &  \\
  & (AI)X \ar@/^4pc/[uu]^{r} \ar@{}[ur]|{=} \ar@/_2pc/[rrr]^{R_{AI,X}} \ar[u]^{a} & (AX)I \ar[uur]_{r} \ar[r]^{R_{A,X}}  & (XA)I \ar[r]^{a} \ar[uur]^{r} \ar@{}[ur]|{\Rightarrow \rho} & X(AI) \ar[uu]^{r}
}\end{equation}

Invertible modifications $\tilde{\pi}, \tilde{\mu}, \tilde{\lm}, \tilde{\rho}$ are defined in the same way as in $\cb$.
We need to show that they are well-defined 2-morphisms in $\zb$, i.e. they satisfy the axiom in (\ref{def 2-mor}).

We check the case of $\ti{\lm}$ in the following and leave other cases to the reader.
The invertible modification $\ti{\lm}: \ti{l} \Rightarrow \ti{l} \circ \ti{a}$ is defined as $\lm: l \Rightarrow l \circ a$ in $\cb$.
We need to show that the following diagram commutes:
\begin{equation} \label{diag:ABX}
\xymatrix{
(AB)X \ar[rr]^{R_{AB,X}}  & & X(AB) \\
((IA)B)X \ar[rr]_{R_{((IA)B,X}} \ar@/^1pc/[u]^{l}  \ar@{-->}@/_1pc/[u]_{l\circ a} \ar@{}[u]|{\Rightarrow \lm} & & X((IA)B) \ar@/^1pc/[u]^{l}  \ar@/_1pc/[u]_{l\circ a} \ar@{}[u]|{\Rightarrow \lm}
}
\end{equation}
where the 2-isomorphisms in the front and back are $R_{l,X}$ and $R_{l \circ a, X}$, respectively.
We decompose the diagram into pieces:
$$
\xymatrix{
(AB)X \ar[r]^{a} & A(BX) \ar[r]^{R_{B,X}}   & A(XB) \ar[r]^{a^*} & (AX)B \ar[r]^{R_{A,X}} & (XA)B \ar[r]^{a} & X(AB) \\
((IA)B)X \ar[r]^{a} \ar@/^1pc/[u]^{l}  \ar@{-->}@/_1pc/[u]_{l\circ a} \ar@{}[u]|{\Rightarrow \lm} & (IA)(BX) \ar[r]^{R_{B,X}} \ar@/^1pc/[u]^{l}  \ar@{-->}@/_1pc/[u]_{l\circ a}  \ar@{}[u]|{\Rightarrow \lm} & (IA)(XB) \ar[r]^{a^*} \ar@/^1pc/[u]^{l}  \ar@{-->}@/_1pc/[u]_{l\circ a} \ar@{}[u]|{\Rightarrow \lm} & ((IA)X)B \ar[r]^{R_{IA,X}} \ar@/^1pc/[u]^{l}  \ar@{-->}@/_1pc/[u]_{l\circ a} \ar@{}[u]|{\Rightarrow \lm} & (X(IA))B \ar[r]^{a} \ar@/^1pc/[u]^{l}  \ar@{-->}@/_1pc/[u]_{r \circ a^*} \ar@{}[u]|{\Rightarrow \mu} & X((IA)B). \ar@/^1pc/[u]^{l}  \ar@/_1pc/[u]_{l\circ a} \ar@{}[u]|{\Rightarrow \lm}
}
$$
The commutativity of each piece follows from the definition of $R_{l,-}$ in (\ref{def Rl}) and the axioms in $\cb$.

\vspace{.2cm}
\n {\bf Step 3: the braiding.}

The braiding of two objects $\ti{A}=(A, R_{A,-}, \tr_{(A|-,?)})$ and $\ti{B}=(B, R_{B,-}, \tr_{(B|-,?)})$ is a 1-morphism $R_{\ti{A}, \ti{B}}=(R_{A, B}, R_{R_{A, B}, -}): \ti{A} \ti{B} \ra \ti{B} \ti{A}$ in $\zb$, where $R_{A,B}=R_{A,-}(B): AB \ra BA$ is the adjoint equivalence in $\cb$, and $R_{R_{A, B}, -}$ is an invertible modification:
\begin{equation} \label{def ABX}
\xymatrix{
&&&&&& \\
(BA)X \ar@/^3pc/[rrrrrr]_{R_{BA,X}} \ar[r]^{a} & B(AX) \ar[r]^{R_{A,X}} & B(XA) \ar[r]^{a^*} & (BX)A \ar@{}[u]|{=} \ar[r]^{R_{B,X}} & (XB)A \ar[rr]^{a} & & X(BA) \\
(AB)X \ar@/_3pc/[rrrrrr]^{R_{AB,X}} \ar[rr]_{a} \ar[u]^{R_{A,B}} & & A(BX) \ar@/^1pc/[ur] \ar@{}[ull]|{\Rightarrow \tr_{(A|B,X)}} \ar[r]_{R_{B,X}} & A(XB) \ar@{}[d]|{=} \ar@/_1pc/[ur] \ar@{}[u]|{\scriptstyle \Rightarrow R_{A,-}(R_{B,X})} \ar[r]_{a^*} & (AX)B \ar[r]_{R_{A,X}}  & (XA)B \ar[r]_{a} \ar@{}[u]|{\Leftarrow \tr_{(A|X,B)}}& X(AB) \ar[u]_{R_{A,B}} \\
&&&&&&
}
\end{equation}

The braiding of an object $\ti{A}=(A, R_{A,-}, \tr_{(A|-,?)})$ and a 1-morphism $(g, R_{g,-})$ is an invertible modification $R_{A,-}(g)$.
The braiding of a 1-morphism $(f, R_{f,-})$ and an object $\ti{B}=(B, R_{B,-}, \tr_{(B|-,?)})$ is an invertible modification $R_{f,-}(B)$.

Two invertible modifications
\begin{equation} \label{def R(A|B,C)}
\tr_{(\ti{A}|\ti{B}, \ti{C})}=\tr_{(A|-,?)}(B,C)=\tr_{(A|B,C)},
\end{equation}
and $\tr_{(\ti{A},\ti{B} | \ti{C})}$ is given by:
\begin{equation} \label{def R(A,B|C)}
\xymatrix{
& A(CB) \ar[r]^{a^*} \ar@{}[ddr]|{=} & (AC)B \ar[dr]^{R_{A,C}}  & \\
A(BC) \ar[ur]^{R_{B,C}} \ar@/_1pc/[dr]_{a^*} & & & (CA)B \ar@/_1pc/[dl]^{a} \\
& (AB)C \ar[r]^{R_{AB, C}} \ar@/_1pc/[ul]^{a} & C(AB). \ar@/_1pc/[ur]_{a^*} &
}
\end{equation}
So $\tr_{(\ti{A},\ti{B} | \ti{C})}$ only differs from the identity by the two units $id_{A(BC)} \Rightarrow aa^*$ and $id_{(CA)B} \Rightarrow a^*a$.

\begin{thm} \label{thm def zb}
The center $\zb$ defined above is a braided monoidal 2-category.
\end{thm}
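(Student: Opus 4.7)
The plan is to verify the claimed structure on $\zb$ in three parallel tracks matching Steps 1--3 of the construction: in each track I would first show that the proposed data actually lands in $\zb$ (i.e., satisfies the axioms (\ref{def AXYZ}), (\ref{def 1-mor}), (\ref{def 2-mor}) that cut out objects, 1-morphisms, and 2-morphisms of $\zb$), and then show that the resulting structure inherits the coherence axioms of a braided monoidal 2-category in the sense of Gurski. The overarching strategy is that the forgetful assignment $\zb \to \cb$ is strict on underlying data, so once the $R$-decorated diagrams are shown to be coherent, every Gurski axiom in $\zb$ reduces to the corresponding axiom in $\cb$.

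For Step 1 the key check is that the composite modification $R_{gf,-}$ formed by vertically pasting $R_{f,-}$ on top of $R_{g,-}$ satisfies (\ref{def 1-mor}); this is immediate by stacking the two pentagons and using naturality of $a$. For Step 2, the most technical item is showing that $(AB, R_{AB,-}, R_{(AB|-,?)})$ is an object of $\zb$, i.e.\ that the pasting in (\ref{def ABXY}) satisfies axiom (\ref{def AXYZ}). I would attack this by identifying the regions on which $\tr_{(A|-,?)}$ and $\tr_{(B|-,?)}$ each act, using the axiom (\ref{def AXYZ}) for $A$ and for $B$ separately, and filling the remaining regions by instances of $\pi$ and naturality of $a$. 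The axiom for $\tilde I$ is then a triangle-type consequence of $\mu, \lm, \rho$. Similarly, the checks that $\ti{\mf a}, \ti{\mf l}, \ti{\mf r}$ are 1-morphisms of $\zb$ (i.e.\ that $R_{a,-}, R_{l,-}, R_{r,-}$ defined in (\ref{def ABCX}), (\ref{def Rl}), (\ref{diag:unit-3}) satisfy (\ref{def 1-mor})) each reduce to one of Gurski's pentagonator or triangulator axioms in $\cb$; the checks that $\ti\pi, \ti\mu, \ti\lm, \ti\rho$ are 2-morphisms of $\zb$ amount to (\ref{def 2-mor}), of which the author has done $\ti\lm$ in (\ref{diag:ABX}) and the other three are directly analogous.

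For Step 3 the delicate check is that $R_{\ti A, \ti B}=(R_{A,B}, R_{R_{A,B},-})$ is a 1-morphism of $\zb$: axiom (\ref{def 1-mor}) applied to the pasting (\ref{def ABX}) unfolds into a diagram where axiom (\ref{def AXYZ}) for $\ti A$ (with the inner object set equal to $B$) is exactly what one needs, so the four-argument coherence built into objects of $\zb$ is consumed here. After that, $\tr_{(\ti A|\ti B,\ti C)}$ and $\tr_{(\ti A,\ti B|\ti C)}$ from (\ref{def R(A|B,C)}) and (\ref{def R(A,B|C)}) are seen to be 2-morphisms of $\zb$ by (\ref{def 2-mor}), and the remaining hexagonator-type axioms of a braided monoidal bicategory reduce to (\ref{def AXYZ}) together with naturality in $\cb$. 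The main obstacle is size rather than conceptual difficulty: the diagrams in (\ref{def ABXY}), (\ref{def ABCX}), and (\ref{def ABX}) are large because we are in the non semi-strict setting, where nontrivial associators of $\cb$ propagate into every pasting. The organizing principle that keeps the verification tractable is that every axiom in $\zb$ has a skeleton given by one of $\pi, \mu, \lm, \rho$ in $\cb$ decorated by $R$-data; once each region of a pasting is identified with either a naturality square, an axiom already living in $\cb$, or the defining axiom (\ref{def AXYZ}) for objects of $\zb$, the checks are mechanical.
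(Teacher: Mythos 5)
Your proposal is correct and follows essentially the same route as the paper: verify that the constructed data satisfies the membership axioms of $\zb$, and then observe that Gurski's four braided monoidal bicategory axioms are built into the construction itself. The one refinement the paper makes is to match each of the four braiding axioms to a specific defining diagram --- the first to the definition of $R_{a,-}$ in (\ref{def ABCX}), the second to the object axiom (\ref{def AXYZ}) itself, the third to the definition of $R_{AB,-}$ in (\ref{def ABXY}), and the fourth to the definition of $R_{R_{A,B},-}$ in (\ref{def ABX}) --- whereas you describe the last group as ``reducing to (\ref{def AXYZ}) together with naturality''; in fact three of the four hold by definition of those modifications rather than by a further reduction, though this does not affect the soundness of your plan.
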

\begin{proof}
See \cite[Section 2.4]{Gur} for Gurski's definition of a braided monoidal bicategory.
Step 2 makes $\zb$ a monoidal 2-category.
The adjoint equivalence $R: \ot \Rightarrow \ot \circ \tau$ in $\bic(\zb \times \zb, \zb)$, and the invertible modifications $\tr_{(\ti{A}|\ti{B}, \ti{C})}, \tr_{(\ti{A},\ti{B} | \ti{C})}$ are defined in Step 3.

The four axioms are about 2-isomorphisms in $\Hom(((\ti{A}\ti{B})\ti{C})\ti{D}, \ti{D}((\ti{A}\ti{B})\ti{C})),$ $\Hom(\ti{A}((\ti{B}\ti{C})\ti{D}), ((\ti{B}\ti{C})\ti{D})\ti{A}),$ $\Hom((\ti{A}\ti{B})(\ti{C}\ti{D}), (\ti{C}\ti{D})(\ti{A}\ti{B}))$ and
$\Hom((\ti{A}\ti{B})\ti{C}, \ti{C}(\ti{B}\ti{A}))$, respectively.
The first one follows from the definition of $R_{a,-}$ in the associator $\ti{a}: (\ti{A}\ti{B})\ti{C} \ra \ti{A}(\ti{B}\ti{C})$ as in (\ref{def ABCX}).
The second is the same as the axiom in (\ref{def AXYZ}).
The third follows from the definition of $R_{AB,-}$ in the tensor product $\ti{A}\ti{B}$ as in (\ref{def ABXY}).
The last one follows from the definition of $R_{R_{A,B},-}$ in the braiding $R_{\ti{A}, \ti{B}}$ as in (\ref{def ABX}).
\end{proof}

\section{Computation of $\zv$} \label{sec zv}

A monoidal bicategory is defined as a tricategory with one object.
We refer to \cite[Section 4.1]{Gur} for the definition of tricategories.
A monoidal 2-category is a monoidal bicategory whose underlying bicategory is a 2-category.

Let $\cv$ be the 1-category of finite dimensional $\K$-vector spaces (i.e. $\mathrm{1Vec}$).
Let $\ve$ be the 2-category of 1-categories of finite semisimple $\cv$-module categories \cite{Os1}. More precisely, objects in $\ve$ are of the form $\cv^{\bo n}$, where $\bo$ is the direct sum; 1-morphisms are the $\cv$-module functors; 2-morphisms are $\cv$-module natural transformations.
The only simple object is $\cv$ whose endomorphism 1-category $\End(\cv) \cong \cv$.
The tensor product $\bt$ in $\ve$ is the Deligne tensor product.

Consider the monoidal 2-category $(\vgw, \bt, I, \bf{a}, \bf{l}, \bf{r}, \pi, \mu, \rho, \lm)$.
It is isomorphic to a direct sum of $|G|$ copies of $\ve$ as 2-categories.
The simple objects are $\de_g$ for $g \in G$.
Any object is of the form $A=\bo_{g \in G}A_g$, where $A_g \in \ve$ is the $g$-component.

Tensor product of two simple objects is $\de_g \bt \de_g'=\de_{gg'}$.
The unit object $I=\de_1$.
The adjoint equivalences $\bf{a}, \bf{l}, \bf{r}$ are all identities (i.e. $a,l,r$ and the 2-isomorphisms defining their naturalities are all identities).
The invertible modifications $\rho$ and $\lm$ are determined by $\pi, \mu$ and the axioms.
So the monoidal structure is completely determined by $\pi$ and $\mu$.
Moreover, $\pi$ is described by a cocycle $\om \in Z^4(G, \Ks)$:
$$\xymatrix{
((\de_{x}\de_{y})\de_{z})\de_{w} \ar[r]^{=} \ar[d]^{=} &   (\de_{x}(\de_{y}\de_{z}))\de_{w} \ar[r]^{=} \ar@{}[d]|{\Downarrow \pi=\om(x,y,z,w)}& \de_{x}((\de_{y}\de_{z})\de_{w}) \ar[d]^{=} \\
(\de_{x}\de_{y})(\de_{z}\de_{w}) \ar[rr]_{=} && \de_{x}(\de_{y}(\de_{z}\de_{w}))
}$$
and $\mu$ is described by a 2-cochain in $C^2(G, \Ks)$ which satisfies certain compatibility conditions with $\om$.
We restrict ourself to the {\em normalized} case: (1) $\om$ is a normalized cocycle, i.e. $\om(x_1,x_2,x_3,x_4)=1$ if $x_i=1$ for some $i$; and (2) the 2-cochain $\mu$ is trivial, i.e. $\mu(x_1, x_2)=1$ for all $x_i$.
In this case, $\mu, \rho, \lm$ are all trivial so that the unit is strict. In particular, it means that the invertible modifications defined by (\ref{diag:unit-1}),(\ref{def Rl}),(\ref{diag:unit-3}) are all identities.
As a consequence, the diagram (\ref{diag:ABX}) is automatically commutative. 

\begin{rmk}
It is expected that isomorphism classes of monoidal structures on $\ve_G$ are classified by $H^4(G, \Ks)$.
Any class in $H^4(G, \Ks)$ has a normalized representative.
So our restriction to the normalized case is inessential.
\end{rmk}


\subsection{The 2-category}
We first compute $\zv$ as a 2-category.
Let $\ta=(A, R_{A,-}, \tr_{(A|-,?)})$ be an object of $\zv$.
The half braiding $R_{A,-}$ gives an equivalence of categories $R_{A,g}: A \bt \de_g \ra \de_g \bt A$, for any $g \in G$.
Since $\vgw$ is a 2-category, $R_{A,-}(id_{\de_g})=id_{R_{A,g}}$.
The equation $R_{A, X \bo Y}=R_{A,X} \bo R_{A,Y}$ implies that $R_{A,-}$ is completely determined by the collection $\{R_{A,g}\}$.

Let $\cl$ denote the set of conjugacy classes of $G$.
We write $h \in c$ and $[h]=c$ if $h \in G$ is in a conjugacy class $c \in \cl$.
Any object of $\zv$ has a direct sum decomposition $\ta=\bo_{c\in \cl}\ta_c$ into its $c$-components due to the half braiding.
It induces a decomposition $\zv=\bo_{c\in \cl}\zv_c$ of the 2-category.

\subsubsection{The component $\zv_c$}
We give an explicit description of one component $\zv_c$ following Step 1 in Section \ref{sec:def-center}.
Let $\{h_1, \dots, h_s\}$ denote all elements of $G$ in the class $c$.

\vspace{.2cm}
\n {\bf Objects.} For an object $\ta_c=(A_c, R_{A,-}, \tr_{(A|-,?)})$, its underlying object $A_c=\bo_{i}~ A_{h_i}$ in $\ve$.
The half braiding is a collection of equivalences
$$R_{A,g}=\bo_i ~ R_{h_i,g}, \quad R_{h_i, g}: A_{h_i} \de_g \ra \de_g A_{h_j},$$
for $h_ig=gh_j$.
The invertible modification $R_{(A|g,g')}=\bo_i~ R_{(h_i|g,g')}$:
\begin{equation} \label{eq new 0mor}
\xymatrix{
A_{h_i}\de_g \de_{g'} \ar[rrrr]^{R_{h_i,gg'}} \ar[drr]_{R_{h_i,g}} & & \ar@{}[d]^{\Downarrow R_{(h_i|g,g')}} & & \de_g \de_{g'} A_{h_k} \\
& & \de_g A_{h_j} \de_{g'} \ar[urr]_{ R_{h_j,g'}} & &
}\end{equation}
for $h_ig=gh_j, h_jg'=g'h_k$.
Here we omit 1-associators which are all identities.
The modifications $\tr_{(h_i|gg',g'')}, \tr_{(h_i|g,g')},\tr_{(h_j|g',g'')}, \tr_{(h_i|g,g'g'')}$ together with the 4-cocycle $\pi$ should satisfy the axiom in (\ref{def AXYZ}), for $A=A_{h_i}, X=\de_g, Y=\de_{g'}, Z=\de_{g''}$.
All adjoint equivalences $a$ are identities so that the four isomorphisms `$\cong$' are identities.
This axiom gives an equation of the 2-isomorphisms:
\begin{equation} \label{eq new key 2iso}
\tr_{(h_i|g,g'g'')}\cdot \tr_{(h_j|g',g'')}=\tau_{h_i}(\om)(g,g',g'') \cdot \tr_{(h_i|gg',g'')}\cdot \tr_{(h_i|g,g')},
\end{equation}
for $h_ig=gh_j, h_jg'=g'h_k, h_kg''=g''h_l$, and
$$\tau_{h_i}(\om)(g,g',g'')=\frac{\om(h_i,g,g',g'')\om(g,g',h_k,g'')}{\om(g,h_j,g',g'')\om(g,g',g'',h_l)}.$$
We introduce a handy notation for Equation\,(\ref{eq new key 2iso}): Eq$(h_i|g,g',g'')$. It is a consequence of the axiom in (\ref{def AXYZ}), which can be simplified by omitting 1- and 2-associators as follows:
\begin{equation} \label{eq new 0mor comm}\xymatrix{
A_{h_i} \de_g \de_{g'} \de_{g''} \ar[rr]^{R_{h_i,gg'g''}} \ar[d]_{R_{h_i,g}} \ar[drr] &  & \de_g \de_{g'} \de_{g''} A_{h_l} \\
 \de_g A_{h_j} \de_{g'} \de_{g''} \ar[rr]_{R_{h_j,g'} } \ar@{-->}[urr] & & \de_g \de_{g'} A_{h_k} \de_{g''}. \ar[u]_{R_{h_k,g''}}
}\end{equation}

\vspace{.2cm}
\n {\bf 1-morphisms.} A 1-morphism is $(f, R_{f,-}): (A_c, R_{A,-}, \tr_{(A|-,?)}) \ra (A_c', R'_{A',-}, \tr'_{(A'|-,?)})$ consists of a 1-morphism $f=\bo_i~f_i, f_i: A_{h_i} \ra A_{h_i}'$, and an invertible modification $R_{f,g}=\bo_i ~ R_{f_i,g}$:
$$\xymatrix{
A_{h_i}' \de_g \ar[r]^{R'_{h_i,g}} & \de_g A_{h_j}'\\
A_{h_i} \de_g \ar[u]^{f_i} \ar[r]_{R_{h_i,g}}  \ar@{}[ur]|{\Rightarrow R_{f_i,g}} &   \de_g A_{h_j}. \ar[u]_{f_j}
}$$
The invertible modifications $R_{f_i,g}, R_{f_j,g'}, R_{f_i,gg'}$ should satisfy the axiom in (\ref{def 1-mor}) for $A=A_{h_i}, A'=A'_{h_i}, X=\de_g, Y=\de_{g'}$.
The axiom is simplified to the following diagram by omitting identity 1-associators:
\begin{equation} \label{eq new 1mor comm}
\xymatrix{
A'_{h_i}\de_g \de_{g'} \ar[rrrr] \ar[drr]  & & \ar@{}[d]^{\Downarrow R'_{(h_i|g,g')}} & & \de_g \de_{g'} A'_{h_k} \\
\ar@{}[dr]^{\Rightarrow R_{(f_i,g)}} & & \de_g A'_{h_j} \de_{g'}  \ar@{}[dr]^{\Rightarrow R_{(f_j,g')}} \ar[urr]& & \\
A_{h_i}\de_g \de_{g'}   \ar[uu]^{f_i} \ar@{-->}[rrrr] \ar[drr] & & \ar@{}[d]^{\Downarrow R_{(h_i|g,g')}} & & \de_g \de_{g'} A_{h_k} \ar[uu]^{f_k}\\
& & \de_g A_{h_j} \de_{g'}  \ar[uu]^>>>>>>{f_j} \ar[urr] & &
}
\end{equation}
where the 2-isomorphism in the back is $R_{f_i,gg'}$.
We denote this compatibility condition for 1-morphisms as Eq1$(h_i|g,g')$.

\n{\bf 2-morphisms.} A 2-morphism $\alpha: (f, R_{f,-}) \Ra (f', R_{f',-})$ is a 2-morphism $\al=\bo_i~\al_i, \al_i: f_i \Ra f'_i$
which satisfies the axiom in (\ref{def 2-mor}) for $A=A_{h_i}, A'=A'_{h_i}, X=\de_g$:
\begin{equation} \label{eq new 2mor comm}
\al_j \cdot R_{f_i,g}=R_{f'_i, g} \cdot \al_i.
\end{equation}
We denote this compatibility condition for 2-morphisms as Eq2$(h_i|g)$.

\subsubsection{The restriction to one grading}
For an object $\ta_c$, its underlying object $A_c=\bo_{h \in c} A_h$ in $\ve$, where $A_h$ are all equivalent to each other by the requirement of the half braiding.
We pick up a grading $h \in c$, and let $\cgh=\{g \in G | gh=hg\}$ denote the centralizer of $h$ in $G$.
We focus on the component $A_h$ and the half braiding with $\de_x$ for $x \in \cgh$ in the following.

For $x \in \cgh$, the equivalence $R_{h,x}: A_h \de_x \ra \de_x A_h$ induces an autoequivalence of $A_h$:
$$\rho_x: A_h \ra A_h  \de_x \xra{R_{h,x}} \de_x  A_h \ra A_h,$$
where the first and last maps are grading shifts in $\vgw$ which are identities in $\ve$.

For $x,y \in \cgh$, the 2-modification $\tr_{(h|x,y)}$ as in (\ref{eq new 0mor}) induces a 2-isomorphism $m(x,y): \rho_y\rho_x \Rightarrow \rho_{xy}$ by taking the natural grading shifts to $A_h$.
Thus, the collection $\{\rho_x ~|~ x \in \cgh\}$ gives a weak right action of $\cgh$ on the 1-category $A_h$.

For $x,y,z \in \cgh$, the modifications $\tr_{(h|xy,z)}, \tr_{(h|x,y)},\tr_{(h|y,z)}, \tr_{(h|x,yz)}$ satisfy Eq$(h|x,y,z)$:
\begin{equation} \label{eq key 2iso}
\tr_{(h|x,yz)}\cdot \tr_{(h|y,z)}=\frac{\om(h,x,y,z)\om(x,y,h,z)}{\om(x,h,y,z)\om(x,y,z,h)}\tr_{(h|xy,z)}\cdot \tr_{(h|x,y)}.
\end{equation}
Note that $h_i=h_j=h_k=h_l=h$ in this case.
Translating to the weak action of $\cgh$ on $A_h$, the 2-isomorphisms satisfy the following equation:
\begin{equation} \label{eq 2iso}
m(x,yz)\cdot m(y,z) =\frac{\om(h,x,y,z)\om(x,y,h,z)}{\om(x,h,y,z)\om(x,y,z,h)} m(xy,z)\cdot m(x,y).
\end{equation}
The action is associative up to a twisting determined by $\om \in Z^4(G, \Ks)$.

Consider the transgression map $\tau_h: C^{k+1}(G, \Ks) \ra C^k(\cgh, \Ks)$ defined by:
$$\tau_h(\om)(x_1,\dots,x_k)=\prod_{0\le i \le k}\om(x_1,\dots,x_i,h,x_{i+1},\dots,x_k)^{(-1)^{i}},$$
for $x_i \in \cgh$.
It is straightforward to check that $\tau_h$ is a chain map.
It induces a map between cohomologies which is still denoted by $\tau_h$.
We are mainly interested in the case of $k=3$.

Equation (\ref{eq 2iso}) can be rewritten as
$$m(x,yz) \cdot m(y,z) =\tau_h(\om)  m(xy,z) \cdot m(x,y).$$
It follows that $A_h \in \rh$, i.e. it is a right module category over the monoidal 1-category $\vc_{\cgh}^{\tau_h(\om)}$.
So there is a forgetful map $\zv_c \ra \rh$ by taking its $h$-component.

On the level of morphisms, a 1-morphism $(f, R_{f,-})$ restricts to a collection $\{R_{f,x}: A_h \de_x \ra A'_h \de_x ~|~ x \in \cgh\}$ of 2-isomorphisms.
This collection defines a 1-morphism in $\rh$.
Similarly, 2-morphisms in $\zv$ restricts to 2-morphisms in $\rh$.
To sum up, we have a forgetful 2-functor $$\Phi_h: \zv_c \ra \rh$$ by restricting to the $h$-component.

\subsubsection{The equivalence of the forgetful functor}
We show that the forgetful functor $\Phi_h$ is an equivalence of 2-categories in the following.
Fix a set of representatives $\{g_i \in G ~|~ i=1,\dots,s ~\mbox{and}~ g_1=1\}$ for the coset $\cgh \backslash G$.
Then $\{h_i=g_i^{-1}hg_i ~|~ i=1,\dots,s\}$ are all elements in $c$, and $h_1=h$ is the base point.
We construct a 2-functor $\Psi_h: \rh \ra \zv_c$ in the inverse direction by extending the action of $\cgh$ on $A_h$ to that of $G$ on $A_c$.

\vspace{.2cm}
\n{\bf Step 1: Objects.}
Let $M=(M, \rho_x, m(x,y))$ be an object of $\rh$, where $\rho_x$ is the action and $m(x,y)$ is the 2-modification.
We want to extend $\rho_x, m(x,y)$ from the $h$-component to $h_i$-component via the path determined by $g_i$.
Define $\Psi_h(M)=(M_c, R_{M,-}, R_{(M|-,?)})$ as
$$M_c=\bo_{i}~M_{h_i}, \quad M_{h_i}=M,$$
$$R_{M,g}=\bo_i~R_{h_i,g}, \quad R_{h_i,g}: M_{h_i} \de_g \xra{=} M \xra{\rho_x} M \xra{=} \de_g M_{h_j},$$
where given $i, j$ and $g \in G$, there is a unique $x \in \cgh$ such that $g_ig=xg_j$.
The 2-modification $R_{(M|g,g')}=\bo_i R_{(h_i|g,g')}$, and $R_{(h_i|g,g')}$ is defined in the following order: $$R_{(h|x,y)}, R_{(h|x,g_i)}, R_{(h|g_i,g)}, R_{(h|x,g)}, R_{(h|g,g')}, R_{(h_i|g,g')},$$ where $x, y \in \cgh$ and $g,g' \in G$.
The initial data is to define $R_{(h|x,y)}=m(x,y)$ and choose any 2-isomorphisms for $R_{(h|x,g_i)}, R_{(h|g_i,g)}$ only requiring that $R_{(h|x,1)}=R_{(h|1,g)}=R_{(h|1,1)}$.
Eq$(h|x,y,g_i)$ involves four 2-isomorphisms:
$$\tr_{(h|x,yg_i)}, \tr_{(h|y,g_i)}, \tr_{(h|x,y)}, \tr_{(h|xy, g_i)}.$$
So $R_{(h|x,g)}$ for $g=yg_i$ is determined by the other three isomorphisms which are already given.
Similarly, Eq$(h|x,g_i,g')$ uniquely determines $R_{(h|g,g')}$ for $g=xg_i$, and Eq$(h|g_i,g,g')$ uniquely determines $R_{(h_i|g,g')}$.

\begin{lemma} \label{lem psi obj}
The construction $(M_c, R_{M,-}, R_{(M|-,?)})$ gives a well-defined object of $\zv_c$.
\end{lemma}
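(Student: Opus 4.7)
The plan is to verify that the triple $(M_c, R_{M,-}, R_{(M|-,?)})$ satisfies the object axiom (\ref{def AXYZ}), which in the present 2-strict setting reduces to checking Eq$(h_i \mid g, g', g'')$ of (\ref{eq new key 2iso}) for every index $i$ and every triple $g, g', g'' \in G$. Before that, I would first confirm well-definedness of the six sequentially constructed families of 2-isomorphisms: each step asserts that a particular instance of Eq uniquely determines a new batch of modifications, so one must check that at the overlap (when some argument equals $1$) the newly determined value agrees with the chosen initial data. This follows immediately from the normalization conventions $R_{(h \mid x,1)} = R_{(h \mid 1,g)} = R_{(h \mid 1,1)} = \operatorname{id}$ together with the fact that $\omega$ is normalized.

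For the axiom itself, the strategy is to reduce every Eq$(h_i \mid g, g', g'')$ to an identity among 2-isomorphisms of the form $R_{(h \mid -, -)}$ by exploiting the coset decompositions $g_i g = x g_j$, $g_j g' = y g_k$, $g_k g'' = z g_l$ with $x, y, z \in C_G(h)$. After expanding each $R_{(h_i \mid -,-)}$ via case~6 of the construction (which is itself Eq$(h \mid g_i, g, g')$), the identity Eq$(h_i \mid g, g', g'')$ becomes an identity among the base-point modifications, modified by scalar factors coming from $\omega$. I would then prove the base-point version Eq$(h \mid g, g', g'')$ by induction on how many of the three arguments fail to lie in $C_G(h)$: the all-in-$C_G(h)$ case is exactly the hypothesis (\ref{eq 2iso}) that $(M, \rho_x, m(x,y))$ is an object of $\rh$, while the remaining mixed cases are precisely the defining Eq's of construction cases 4 and 5, or follow from them after one further use of the $4$-cocycle condition.

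The main obstacle is bookkeeping the $\omega$-factors. The twist $\tau_{h_i}(\omega)(g,g',g'')$ on the right-hand side of Eq$(h_i \mid g, g', g'')$ is a ratio of four values of $\omega$, and one must show that the twists produced by the successive uses of defining Eq's telescope exactly to $\tau_{h_i}(\omega)(g,g',g'')$. This telescoping is exactly a repeated application of the $4$-cocycle identity $d\omega = 1$ with entries drawn from $\{h, g_i, g_j, g_k, g_l, x, y, z, g, g', g''\}$ in specific orderings; in the special case $i=1$ and $g,g',g'' \in C_G(h)$ it reduces to the familiar statement that the transgression $\tau_h\colon C^{n+1}(G,\K^\times) \to C^n(C_G(h),\K^\times)$ is a cochain map. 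The genuinely new content is only that the choice of coset representatives $\{g_i\}$ and the free choices of $R_{(h \mid x, g_i)}$ and $R_{(h \mid g_i, g)}$ cancel out of the final identity, because they enter symmetrically into both sides of Eq$(h_i \mid g, g', g'')$ via the definition of $R_{(h_i \mid g, g')}$. Once this verification is carried out, the six families of 2-isomorphisms assemble into a coherent invertible modification $R_{(M \mid -,?)}$, and $(M_c, R_{M,-}, R_{(M \mid -,?)})$ is a well-defined object of $\zv_c$.
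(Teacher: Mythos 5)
Your proposal is correct and follows essentially the same route as the paper's proof: a bootstrap from the defining instances Eq$(h|x,y,z)$, Eq$(h|x,y,g_i)$, Eq$(h|x,g_i,g)$, Eq$(h|g_i,g,g')$ to the general Eq$(h_i|g,g',g'')$ via coset decompositions, with the $\omega$-factors telescoping because $d\omega=1$. The only difference is presentational: the paper packages your ``telescoping'' into the five-term compatibility conditions CC$(h_i|g,g',g'',g''')$ coming from the axiom for four-fold products, and uses the principle that any four of the five instances of Eq imply the fifth, which is exactly the repeated cocycle bookkeeping you describe.
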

\begin{proof}
By definition it suffices to show that Eq$(h_i|g,g',g'')$ in (\ref{eq new key 2iso}) holds for all $g,g',g'' \in G$ and all $i=1,\dots,s$.
The key point is that there is a compatibility condition between Equations $$\mbox{Eq}(h_i|g,g',g''), \mbox{Eq}(h_i|g,g',g''g'''), \mbox{Eq}(h_i|g,g'g'',g'''),\mbox{Eq}(h_i|gg',g'',g'''),\mbox{Eq}(h_j|g',g'',g''')$$ from the axiom (\ref{def AXYZ}) for $M_{h_i}  \de_g \de_{g'}  \de_{g''} \de_{g'''}$, where $h_ig=gh_j$.
We denote this compatibility condition by CC$(h_i|g,g',g'',g''')$.
If any four of the five equations hold then so is the remaining one.
We prove that Eq$(h_i|g,g',g'')$ holds in the following order: (1) $(h|x,y,z), (h|x,y,g_i), (h|x,g_i,g), (h|g_i,g,g')$, and (2) $(h|x,y,g), (h|x,g,g'),$ $(h|g,g',g''), (h_i|g,g',g'')$, where $x,y,z \in \cgh, g,g',g'' \in G$.
The equations in the first group holds from the construction.
The condition CC$(h|x,y,z,g_i)$ implies that Eq$(h|x,y,g)$ holds for $g=zg_i$ since the other four equations Eq$(h|x,y,z)$, Eq$(h|xy,z,g_i)$, Eq$(h|x,yz,g_i)$, Eq$(h|y,z,g_i)$ hold.
Similarly, the condition CC$(h|x,y,g_i,g')$ implies that Eq$(h|x,g,g')$ holds for $g=yg_i$; CC$(h|x,g_i,g',g'')$ implies that Eq$(h|g,g',g'')$ holds for $g=xg_i$; and CC$(h|g_i,g,g',g'')$ implies that Eq$(h_i|g,g',g'')$ holds.
\end{proof}

\vspace{.2cm}
\n{\bf Step 2: 1-morphisms.}
Let $(f, M_{f,x}): (M, \rho_x, m(x,y)) \ra (M', \rho'_x, m'(x,y))$ denote a 1-morphism in $\rh$, where $f: M \ra M'$, and $M_{f,x}$ is the 2-modification for $x \in \cgh$.
We define $\Psi_h(f, R_{f,x})=\bo_i(f_i, R_{f_i,-}): \Psi_h(M) \ra \Psi_h(M')$, where $f_i: M_{h_i} \xra{=} M \xra{f} M' \xra{=} M'_{h_i}$, and $R_{f_i,g}$ is the 2-modification for $g \in G$ given below.

The only constraint for a 1-morphism is Eq1$(h_i|g,g')$ in (\ref{eq new 1mor comm}) for $A_{h_i}=M_{h_i}, A'_{h_i}=M'_{h_i}$.
Eq1$(h_i|g,g')$ contains five terms $R_{f_i,g}, R_{f_i, gg'}, R_{f_j, g'}$ and $R_{(h_i|g,g')}, R'_{(h_i|g,g')}$, where $h_ig=gh_j$, and the last two terms are already given.
For the first three terms, any two of them determines the remaining one.

We define $R_{f_i,g}$ in the following order: $R_{f_1,x}, R_{f_1,g_i}, R_{f_1,g}, R_{f_i,g}$ for $x \in \cgh, g \in G$.
Note that $h_1=h$ is the base point.
The initial data is to define $R_{f_1,x}=M_{f,x}$ and $R_{f_1,g_i}=id$ for all $i=1,\dots,s$.
Eq1$(h_1|x,g_i)$ implies that $R_{f_1, g}$ for $g=xg_i$ is uniquely determined by $R_{f_1,x}$ and $R_{f_1, g_i}$.
Eq1$(h_1|g_i,g)$ implies that $R_{f_i, g}$ is uniquely determined by $R_{f_1,g_i}$ and $R_{f_1, g_ig}$.

An argument similar to the proof of Lemma \ref{lem psi obj} shows that $\Psi_h(f, R_{f,x})=\bo_i(f_i, R_{f_i,-})$ gives a well-defined 1-morphism in $\zv_c$.
It suffices to show that Eq1$(h_i|g,g')$ holds for all $g,g'\in G$.
There is a compatibility condition between $$\mbox{Eq}1(h_i|g,g'), \mbox{Eq}1(h_i|g,g'g''), \mbox{Eq}1(h_i|gg',g''),\mbox{Eq}1(h_j|g',g'')$$ from (\ref{eq new 1mor comm}) for $M_{h_i}  \de_g  \de_{g'}  \de_{g''}$.
We denote this compatibility condition as CC1$(h_i|g,g',g'')$.
If any three of the four constraints hold then so is the remaining one.
We prove that Eq1$(h_i|g,g')$ holds in the following order: (1) $(h_1|x,y), (h_1|x,g_i), (h_1|g_i,g)$, and (2) $(h_1|x,g), (h_1|g,g'), (h_i|g,g'), (h_i|g,g',g'')$, where $x,y \in \cgh, g,g' \in G$.
The constraints in the first group holds from the construction.
The condition CC1$(h_1|x,y,g_i)$ implies that Eq1$(h_1|x,g)$ holds for $g=yg_i$; CC1$(h_1|x,g_i,g')$ implies that Eq1$(h_1|g,g')$ holds for $g=xg_i$; and CC1$(h_1|g_i,g,g')$ implies that Eq1$(h_i|g,g')$ holds.

\vspace{.2cm}
\n{\bf Step 3: 2-morphisms.}
Let $\al: (f, M_{f,x}) \Ra (f', M_{f',x})$ be a 2-morphism in $\rh$.
We define $\Psi_h(\al)=\bo_i \al_i: \Psi_h(f, M_{f,x}) \Ra \Psi_h(f', M_{f',x})$, where $\al_i: f_i \Ra f'_i$ is given below.
The only constraint for a 2-morphism is Eq2$(h_i|g)$ in (\ref{eq new 2mor comm}).
The term $\al_j$ is determined by $\al_i$ since $R_{f_i,g}$ and $R_{f'_i,g}$ are isomorphisms.

We define $\al_1=\al$ as the 2-morphism in $\rh$, and define $\al_i$ from $\al_1$ and Eq2$(h_1|g_i)$ for $i=2,\dots,s$.
A similar argument shows that $\Psi_h(\al)=\bo_i \al_i$ gives a well-defined 2-morphism in $\zv_c$.

We complete the definition of the 2-functor $\Psi_h: \rh \ra \zv_c$.

\vspace{.2cm}
To show that $\Phi_h$ and $\Psi_h$ give an equivalence of 2-categories, it is obvious that $\Phi_h \circ \Psi_h$ is the identity 2-functor.
It remains to show that $\Psi_h$ is essentially surjective and fully faithful.
The proof is similar to the construction of $\Psi_h$ above and we leave it to the reader.

\begin{thm} \label{thm1}
There is an equivalence of 2-categories:
$$
\zv \simeq \bo_{[h] \in \cl} \rh,
$$
by choosing one representative $h$ for each class $c \in \cl$.
In particular, $\zv$ is semisimple in the sense of Douglas and Reutter \cite{DR}.
\end{thm}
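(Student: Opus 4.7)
The plan is to assemble the equivalence from the two 2-functors $\Phi_h$ and $\Psi_h$ constructed in the preceding subsection. First I would invoke the direct sum decomposition $\zv \simeq \bo_{c \in \cl} \zv_c$, which holds because the half braiding of any object $\ta$ sends the $h$-component to the $ghg^{-1}$-component and so cannot mix distinct conjugacy classes. It therefore suffices to prove $\zv_c \simeq \rh$ for each $c$, after choosing a representative $h \in c$.

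For this component-wise equivalence, I would note first that $\Psi_h$ is built so that $M_h = M$ with trivial initialization 2-isomorphisms $R_{(h|1,-)}$ and $R_{(h|-,1)}$, hence $\Phi_h \circ \Psi_h$ is visibly the identity 2-functor on $\rh$; this automatically makes $\Phi_h$ essentially surjective and $\Psi_h$ faithful. The remaining task is to verify that $\Psi_h$ is essentially surjective and full on 1- and 2-morphisms. For essential surjectivity, given $\ta_c \in \zv_c$, I would exhibit an invertible 1-morphism $\Psi_h(\Phi_h(\ta_c)) \to \ta_c$ whose $h_i$-component is the equivalence $M \to M_{h_i}$ built from the half braiding with $\de_{g_i}$. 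Fullness on 1-morphisms (resp. 2-morphisms) follows because a morphism whose source and target lie in the image of $\Psi_h$ is pinned down by its $h$-component, which in turn may be chosen arbitrarily subject only to the defining constraint in $\rh$.

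The hard part will be the bookkeeping: verifying that every global equation Eq$(h_i|g,g',g'')$, Eq1$(h_i|g,g')$, or Eq2$(h_i|g)$ actually follows from its restriction to $\cgh$. I would handle this exactly as in the proof of Lemma \ref{lem psi obj}, propagating in the inductive order $(h|x,y,z) \to (h|x,y,g_i) \to (h|x,g_i,g) \to (h|g_i,g,g')$ and then to arbitrary triples, invoking at each step a pentagonal compatibility CC$(\cdots)$ extracted from the axiom diagram (\ref{def AXYZ}), which asserts that any four of five related equations imply the fifth. Semisimplicity of $\zv$ is then immediate from the equivalence: $\operatorname{1Vec}_{\cgh}^{\tau_h(\om)}$ is a semisimple monoidal 1-category, its 2-category $\rh$ of module categories is semisimple in the sense of Douglas--Reutter, and a finite direct sum of semisimple 2-categories is semisimple.
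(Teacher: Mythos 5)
Your proposal is correct and follows essentially the same route as the paper: the decomposition $\zv \simeq \bo_{c\in\cl}\zv_c$ via the half braiding, the pair of 2-functors $\Phi_h$ and $\Psi_h$ with $\Phi_h\circ\Psi_h = \mathrm{id}$, and the propagation of the constraints Eq$(h_i|g,g',g'')$, Eq1, Eq2 from $\cgh$ to all of $G$ using the compatibility conditions CC$(\cdots)$ exactly as in Lemma \ref{lem psi obj}. The paper likewise leaves the essential surjectivity and fully faithfulness of $\Psi_h$ to the reader as an argument parallel to the construction, so no discrepancy arises.
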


Any object $\ta_c$ of $\zv$ is determined by one of its component $A_h$ as an object of $\rh$ from Theorem \ref{thm1}.
It is known that any indecomposable object of $\rh$ is given by a pair $(H, \psi)$, where $H$ is a subgroup of $\cgh$, $\psi \in C^2(H, \Ks)$ such that $d \psi=\tau_h(\om)^{-1}|_H$ \cite[Example 2.1]{Os2}.
Note that we consider right modules over $\vc_{\cgh}^{\tau_h(\om)}$ instead of left modules.
More precisely, the object associated to $(H, \psi)$ is $\bo_{s \in H \backslash \cgh}\cv(s)$, where each component $\cv(s)=\cv$.
The action of $\vc_{\cgh}^{\tau_h(\om)}$ is given by multiplication in $\cgh$ on the right.
The stablizer of $\cv(1)$ is equivalent to $\vc_H$, and $\psi$ determines its 1-associator.
Let $\cv(H \backslash K)=\bo_{s \in H \backslash K}\cv(s)$ for $H<K$.

We express any indecomposable object $\ta_c$ as
$$A(h,H,\psi), \quad \mbox{where}~~[h]=c, H < \cgh,  \psi \in C^2(H, \Ks), d \psi=\tau_h(\om)^{-1}|_H.$$
The presentation is independent of the choice of $h \in c$: $A(h, H, \psi) \simeq A(\gh, g^{-1}Hg, g^*(\psi))$, where $g^*(\psi) \in C^2(g^{-1}Hg, \Ks)$ is induced by conjugation.

We discuss a simple example where 2-categories $\zv$ could be inequivalent for different classes $\om$.

\begin{example} \label{ex Z2Z2}
Consider an abelian group $G=\Z_2 \oplus \Z_2=\{1,s_1,s_2,s_1s_2\}$.
There is a canonical isomorphism $f: H^{k+1}(G, \Z) \ra H^{k}(G, \Ks)$ for $k \ge 1$ which commutes with the transgression map $\tau_h$.
The cup product makes $H^*(G, \Z)$ a graded super commutative ring as
$$H^*(G, \Z) \cong \Z_2[\alpha, \beta, \gamma] / (\gamma^2-\alpha\beta(\alpha+\beta)), \quad \deg(\alpha)=\deg(\beta)=2, \deg(\gamma)=3,$$
see \cite[Proposition 4.1]{Le}.
We have isomorphisms of abelian groups:
$$H^2(G,\Z)=\Z_2 \lan \alpha, \beta \ran, H^3(G,\Z)=\Z_2\lan \gamma \ran, H^4(G,\Z)=\Z_2\lan \alpha^2, \alpha\beta, \beta^2 \ran, H^5(G,\Z)=\Z_2\lan \alpha\gamma, \beta\gamma \ran.$$
The transgression map $\tau_h: H^{k+1}(G, \Z) \ra H^{k}(G, \Z)$ is a derivation $\tau_h(ab)=\tau_h(a)b+a\tau_h(b)$, where the signs are irrelevant since all groups are $2$-torsion.
By properly choosing generators $\alpha, \beta$, we could have $\tau_1(\gamma)=0, \tau_{s_1}(\gamma)=\alpha, \tau_{s_2}(\gamma)=\beta, \tau_{s_1s_2}(\gamma)=\alpha+\beta$, and $\tau_h(\alpha)=\tau_h(\beta)=0$, for all $h \in G$.
So
$$\tau_1(\alpha\gamma)=0, \tau_{s_1}(\alpha\gamma)=\alpha^2, \tau_{s_2}(\alpha\gamma)=\alpha\beta, \tau_{s_1s_2}(\alpha\gamma)=\alpha^2+ \alpha\beta.$$

Consider two classes $\om_0=1, \om_1=f(\alpha\gamma) \in H^4(G, \Ks)$.
By Theorem \ref{thm1}, we obtain the following equivalences of 2-categories:
\begin{align*}
\mathcal{Z}(\mathrm{2Vec}_{G}^{\omega_0}) &\simeq \rep(G)^{\bo 4}, \\
\mathcal{Z}(\mathrm{2Vec}_{G}^{\omega_1}) &\simeq \rep(G) \bo \rep(G, f(\alpha^2)) \bo \rep(G, f(\alpha\beta)) \bo \rep(G, f(\alpha^2+ \alpha\beta)).
\end{align*}
The equivalence classes of indecomposable objects of $\rep(G, \chi)$ for $\chi \in H^3(G, \Ks)$ are classified by the conjugacy classes of pairs $(H, \psi)$ where $H<G$ is a subgroup such that $\chi|_H=1$, and $\psi \in H^2(H, \Ks)$, see \cite[Example 2.1]{Os2}.
The number of equivalence classes of indecomposable objects of $\rep(G, \chi)$ is finite, denoted by $c(G, \chi)$.
Taking $H=G$, $\rep(G)$ has an indecomposable object $(G, \psi)$ for $\psi \in H^2(G, \Ks)$, while $\rep(G, \chi)$ does not have such an indecomposable object when $\chi$ is nontrivial.
So $c(G,1) > c(G, \chi)$, and the 2-categories $\rep(G)$ and $\rep(G, \chi)$ are not equivalent for nontrivial $\chi$.
Hence, $\mathcal{Z}(\mathrm{2Vec}_{G}^{\omega_0})$ and $\mathcal{Z}(\mathrm{2Vec}_{G}^{\omega_1})$ are not equivalent as 2-categories.
\end{example}

\subsection{The braided monoidal 2-category} \label{sec:braided zv}
Before we compute the tensor product $A(h,H,\psi)\bt A(h',H',\psi')$, we first forget about the grading.  We have $A(h,H,\psi) \simeq \cv(H \backslash G)$ as objects in $\ve$.
The half braiding induces a weak action of $\vc_G$ on $\cv(H \backslash G)$ which is given by multiplication in $G$ on the right.
The tensor product of two weak right $\vc_G$ module categories is given by the Deligne tensor product, and we have
\begin{equation} \label{eq HH'}
\cv(H \backslash G) \bt \cv(H' \backslash G) \cong \bo_{t \in H \backslash G / H'} \cv(H_t \backslash G),
\end{equation}
where the sum is over the double coset $H \backslash G / H'$, and $H_t=t^{-1}Ht \cap H'$.

A direct computation from (\ref{def ABXY}) shows that $A(h,H,\psi)\bt A(h',H',\psi')$ contains a component $A(h_t,H_t,\psi_t)$,
where $t \in H \backslash G / H'$, $h_t=t^{-1}hth', H_t=t^{-1}Ht \cap H'$, and
\begin{equation} \label{eq psit}
\psi_t=t^*(\psi)|_{H_t}\cdot \psi'|_{H_t}\prod\limits_{0 \le i \le j \le 2}\psi_{ij,t}^{(-1)^{i+j}}\in C^2(H_t, \Ks).
\end{equation}
Here $\psi_{ij,t}(x_1, x_2)=\om(\dots, x_i, t^{-1}ht, \dots, x_j, h', \dots),$ for $0 \le i \le j \le 2$. The underlying 2-category of $A(h_t,H_t,\psi_t)$ is precisely $\cv(H_t \backslash G)$ in (\ref{eq HH'}).

\begin{lemma} \label{lem tau hh'}
Given $A(h,H,\psi), A(h',H',\psi')$ and $t \in H \backslash G / H'$, we have $H_t < C_G(t^{-1}hth')$ and $d \psi_t=\tau_{t^{-1}hth'}(\om)^{-1}|_{H_t}$.
\end{lemma}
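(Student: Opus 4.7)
First, I would handle the two assertions separately. The containment $H_t < C_G(t^{-1}hth')$ is an elementary group-theoretic calculation: any $x \in H_t$ has the form $x = t^{-1}at$ with $a \in H$, and since $H < C_G(h)$ one has $ah = ha$, so conjugating by $t^{-1}$ gives $x \cdot (t^{-1}ht) = (t^{-1}ht) \cdot x$. Combined with $xh' = h'x$ (which follows from $x \in H' < C_G(h')$), this forces $x$ to commute with $t^{-1}hth'$.

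The cocycle identity $d\psi_t = \tau_{t^{-1}hth'}(\om)^{-1}|_{H_t}$ is the substantive part. I would apply the coboundary $d$ to the explicit formula \eqref{eq psit}. Since $d$ commutes with pullback along $t$ and with restriction, and since $d\psi = \tau_h(\om)^{-1}|_H$ and $d\psi' = \tau_{h'}(\om)^{-1}|_{H'}$ by hypothesis, this reduces the claim to an identity in $C^3(H_t, \Ks)$ of the form
\begin{equation*}
\prod_{0 \le i \le j \le 2}(d\psi_{ij,t})^{(-1)^{i+j}} \cdot t^*(\tau_h(\om))|_{H_t} \cdot \tau_{h'}(\om)|_{H_t} = \tau_{t^{-1}hth'}(\om)|_{H_t}.
\end{equation*}
Every factor in this identity is an alternating product of values of $\om$ whose entries are drawn from the five-letter alphabet $\{t^{-1}ht, h', x_1, x_2, x_3\}$ (with some $x_k$ possibly replaced by $tx_kt^{-1}$). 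I would expand each $d\psi_{ij,t}(x_1, x_2, x_3)$ into four values of $\om$ and then invoke the 4-cocycle condition $d\om = 1$ on well-chosen 5-tuples, one for each index pair $(i,j)$, obtained by distributing the five letters among five slots. Each application of $d\om = 1$ collapses four $\om$-values into one, and after telescoping, what remains reassembles into the four entries of $\tau_{t^{-1}hth'}(\om)$. The commutativity relations established in the first part are essential here: they allow adjacent occurrences of $t^{-1}ht$ and $h'$ to be interpreted as the single entry $t^{-1}hth'$ appearing in the target transgression.

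The principal obstacle is combinatorial bookkeeping rather than any deeper difficulty, and one must be careful with the discrepancy between $t^*(\tau_h(\om))$ and $\tau_{t^{-1}ht}(\om)$, which itself differs by a $d$-exact correction built from $\om$. A conceptually cleaner route, which I would develop in parallel as a sanity check, is to introduce a ``double transgression''
\begin{equation*}
\tau_{h_1, h_2} : C^{n+2}(G, \Ks) \to C^n(C_G(h_1)\cap C_G(h_2), \Ks)
\end{equation*}
for each commuting pair $(h_1, h_2)$, and to verify directly from $d\om = 1$ the product formula $\tau_{h_1 h_2}(\om) = \tau_{h_1}(\om) \cdot \tau_{h_2}(\om) \cdot d\,\tau_{h_1,h_2}(\om)$. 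Specializing to $(h_1, h_2) = (t^{-1}ht, h')$ and matching $\tau_{h_1, h_2}(\om)$ against $\prod \psi_{ij,t}^{(-1)^{i+j}}$ (up to the $t^*$-twist) then yields the lemma, and simultaneously endows the correction factor in \eqref{eq psit} with a transparent meaning as the 2-cochain measuring the failure of $\tau$ to be multiplicative on commuting pairs.
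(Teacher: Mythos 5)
Your proposal is correct and follows essentially the same route as the paper's proof: the paper reduces to $t=1$, observes $H\cap H' < C_G(h)\cap C_G(h') < C_G(hh')$, and obtains the cocycle identity by taking the alternating product of the trivial cochains $\chi_{kl}(x_1,x_2,x_3)=d\om(\dots,x_k,h,\dots,x_l,h',\dots)=1$ for $0\le k\le l\le 3$ --- exactly your scheme of applying $d\om=1$ to the $5$-tuples obtained by interleaving $h$ and $h'$ among the $x_i$ and telescoping. The only slip is an inverse in your displayed identity (one should have $\prod_{0\le i\le j\le 2}(d\psi_{ij,t})^{(-1)^{i+j}} = t^*(\tau_h(\om))\,\tau_{h'}(\om)\,\tau_{t^{-1}hth'}(\om)^{-1}$ on $H_t$, not its reciprocal), which is precisely the bookkeeping you flag, and your ``double transgression'' reformulation is a clean repackaging of the same computation, with $\prod\psi_{ij,t}^{(-1)^{i+j}}$ playing the role of the correction cochain $\tau_{t^{-1}ht,\,h'}(\om)$.
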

\begin{proof}
We only check the case of $t=1$.
We have $H_1=H \cap H' < \cgh \cap C_G(h') < C_G(hh')$.
Consider the trivial cochain $\chi_{kl}=1 \in C^3(H\cap H', \Ks): \chi_{kl}(x_1,x_2,x_3)=d\om(\dots, x_k, h, \dots, x_l, h', \dots),$
for $0 \le k \le l \le 3$.
A direct computation shows that
$$\tau_{hh'}(\om)\tau_{h}(\om)^{-1}\tau_{h'}(\om)^{-1}\prod\limits_{0 \le i \le j \le 2}d\psi_{ij}^{(-1)^{i+j}}=\prod\limits_{0 \le k \le l \le 3}\chi_{kl}^{(-1)^{k+l}}=1,$$
when restricting to $H \cap H'$.
So $d \psi_1=\tau_{hh'}(\om)^{-1}|_{H \cap H'}$.
\end{proof}

\begin{prop} \label{prop tensor}
The tensor product of two indecomposable objects in $\zv$ is given by $$A(h,H,\psi) \bt A(h',H',\psi') \cong \bo_{t}A(h_t,H_t,\psi_t),$$
where the sum is over $t \in H \backslash G / H'$.
\end{prop}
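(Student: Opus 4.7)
The plan is to compare the tensor product $A(h,H,\psi) \bt A(h',H',\psi')$ against the proposed decomposition summand by summand, using the equivalence $\zv_c \simeq \rh$ from Theorem \ref{thm1} to reduce the check to computations inside $\rh$ for each conjugacy class. First I would forget the grading and the half-braiding: as an underlying object of $\ve$, $A(h,H,\psi) \simeq \cv(H \backslash G)$ carries the canonical right $\vc_G$-module structure given by multiplication, and the Deligne tensor product formula (\ref{eq HH'}) yields $\cv(H \backslash G) \bt \cv(H' \backslash G) \cong \bo_{t \in H \backslash G / H'} \cv(H_t \backslash G)$. This matches the underlying 2-category of the right-hand side of the claimed decomposition and, in particular, rules out any additional summands.

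Next I would turn on the half-braiding data. The half-braiding on $A(h,H,\psi) \bt A(h',H',\psi')$ is the composite prescribed by diagram (\ref{def ABXY}), built from the half-braidings of the two factors together with naturality cells. On the $t$-th summand, pushing a grading element $\de_x$ from the right across the two factors conjugates $h$ by $t$ and then multiplies by $h'$, so the grading on that summand is concentrated in the conjugacy class of $h_t = t^{-1}hth'$. Theorem \ref{thm1} then allows me to restrict attention to the distinguished $h_t$-graded component, which is a right module category over $\vc_{C_G(h_t)}^{\tau_{h_t}(\om)}$; its underlying category $\cv(H_t \backslash G)$ is manifestly indecomposable with stabilizer $H_t$ at the coset of the identity.

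The core computation is then to extract, from diagram (\ref{def ABXY}) evaluated at $X=\de_{x_1}, Y=\de_{x_2}$ with $x_1, x_2 \in H_t$, the 2-cocycle $\psi_t \in C^2(H_t, \Ks)$ describing this module structure on $\cv(H_t \backslash G)$. Pasting the 2-cells of (\ref{def ABXY}) contributes: the factor $t^*(\psi)|_{H_t}$ from $\tr_{(A|-,?)}$ (with $t^*$ appearing because the left factor is evaluated on the coset representative $t$), the factor $\psi'|_{H_t}$ from $\tr_{(B|-,?)}$, and the correction terms $\psi_{ij,t}^{(-1)^{i+j}}$ from the various instances of $\pi$, indexed by which pair of slots the two gradings $t^{-1}ht$ and $h'$ occupy among $x_1, x_2$. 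This reproduces formula (\ref{eq psit}) exactly, and Lemma \ref{lem tau hh'} guarantees the cocycle condition $d\psi_t = \tau_{h_t}(\om)^{-1}|_{H_t}$, so that $A(h_t, H_t, \psi_t)$ is a well-defined indecomposable object.

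The main obstacle is the final bookkeeping step: diagram (\ref{def ABXY}) is very large, and I must correctly match each instance of $\pi$ to the corresponding $\psi_{ij,t}$ and keep track of orientations to recover the alternating sign $(-1)^{i+j}$. Once this accounting is carried out, the decomposition is forced: the underlying-category identification from (\ref{eq HH'}) fixes the number and size of the summands, the composite half-braiding fixes the grading $h_t$ of each summand, and the extracted $\psi_t$ fixes the module structure up to equivalence in $\mathrm{2Rep}(C_G(h_t), \tau_{h_t}(\om))$.
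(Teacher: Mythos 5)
Your proposal is correct and follows essentially the same route as the paper: the underlying decomposition via the Deligne tensor product formula (\ref{eq HH'}), the ``direct computation'' from diagram (\ref{def ABXY}) producing the grading $h_t=t^{-1}hth'$ and the cochain $\psi_t$ of (\ref{eq psit}), well-definedness via Lemma \ref{lem tau hh'}, and the counting argument that each summand appears at least once by the computation and at most once by (\ref{eq HH'}). The paper's stated proof is just the final three-sentence assembly of these ingredients, all of which you have identified.
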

\begin{proof}
Lemma \ref{lem tau hh'} implies that $A(h_t,H_t,\psi_t)$ is well-defined.
Each component of the right hand side appears in the tensor product at least once.
It follows from (\ref{eq HH'}) that each of them appears at most once.
\end{proof}

The 1-associators in $\vgw$ are all identities.
In the contrast, a 1-associator $\ti{a}: (\ti{A} \ti{B}) \ti{C} \ra \ti{A}(\ti{B}\ti{C})$ is a 1-morphism $(a, R_{a,-})$, where $a: (AA')A'' \ra A(A'A'')$ is the identity in $\vgw$, and $R_{a,-}$ is an invertible modification given by Diagram (\ref{def ABCX}) which might be nontrivial.
The associators $\ti{l}, \ti{r}$ are all identities since the 4-cocycle $\om$ is normalized.

Invertible modifications $\tilde{\pi}, \tilde{\mu}, \tilde{\lm}, \tilde{\rho}$ are defined in the same way as in $\vgw$.
In particular, $\tilde{\mu}, \tilde{\lm}, \tilde{\rho}$ are all identities, and $\tilde{\pi}$ is given by $\om$.

\medskip

The braiding of two objects $\ti{A}=(A, R_{A,-}, \tr_{(A|-,?)})$ and $\ti{B}=(B, R'_{B,-}, \tr'_{(B|-,?)})$ is a 1-morphism $R_{\ti{A}, \ti{B}}=(R_{A, B}, R_{R_{A, B}, -}): \ti{A} \ti{B} \ra \ti{B} \ti{A}$ in $\zv$, where $R_{A,B}=R_{A,-}(B): AB \ra BA$ is determined by the half braiding associated to $\ti{A}$ and the grading of $B$, and $R_{R_{A, B}, -}$ is an invertible modification given in Diagram (\ref{def ABX}).
More precisely, $R_{A,B}=\bo~R_{h_i,g}:$
\begin{equation} \label{eq braid}
\begin{array}{cccc}
R_{h_i,g}: & A_{h_i}B_g & \ra & B_gA_{h_j}  \\
& (x,y) & \mapsto & (y, \rho_g(x)),
\end{array}\end{equation}
for $x \in A_{h_i}, y \in B_g$, and $\rho_g: A_{h_i} \ra A_{h_j}$ is the action of $G$ on $A$ for $h_ig=gh_j$.

When $B$ is concentrated in the grading $1$, we have
\begin{equation} \label{eq swap}
R_{A,B}=\Sigma_{A, B}: AB \ra BA
\end{equation}
where $\Sigma_{A,B}$ is the canonical permutation equivalence between the Deligne tensor products which simply permutes the two factors $A$ and $B$ as objects of $\ve$.

\begin{rmk} \label{rmk RAf}
The naturality 2-isomorphism $R_{A,f}$ associated to a 1-morphism $f: B \ra B'$ is the identity when $B$ and $B'$ are concentrated in the grading $1$.
\end{rmk}

The invertible modifications $\tr_{(\ti{A}|\ti{B}, \ti{C})}=\tr_{(A|-,?)}(B,C)=\tr_{(A|B,C)}$ is given by the half braiding associated to $\ti{A}$,
and $\tr_{(\ti{A},\ti{B} | \ti{C})}$ is the identity as in Diagram (\ref{def R(A,B|C)}) since the 1-associators are the identities.

\medskip
In summary, $\zv$ is a braided monoidal 2-category whose underlying 2-category is given in Theorem\,\ref{thm1},  and the monoidal structure is given by Proposition\,\ref{prop tensor}, and the braiding structure is given by the half-braidings as explained in Step 3 in Section\,\ref{sec:def-center}.

\begin{example} \label{ex Z2}
Consider $G=\Z_2=\{1,s\}, \om=1$. There are two conjugacy classes: $h=1, h=s$.
We have an equivalence $\mathcal{Z}(\mathrm{2Vec}_{\Z_2}^\omega)=\mathcal{Z}(\mathrm{2Vec}_{\Z_2}^\omega)_1 \bo \mathcal{Z}(\mathrm{2Vec}_{\Z_2}^\omega)_s \simeq \rep(\Z_2) \bo \rep(\Z_2)$ of 2-categories from Theorem \ref{thm1}.
Up to isomorphism $\rep(\Z_2)$ has two indecomposable objects: the unit $I$ and the regular representation $T=\vc_{\Z_2}$.
A complete set of isomorphism classes of indecomposable objects of $\mathcal{Z}(\mathrm{2Vec}_{\Z_2}^\omega)$ is $\{I, T, I_s, T_s\}$, where the subscript $s$ denotes the nontrivial grading.

The nontrivial 1-categories of 1-morphisms are
\begin{align*}\End(I) \simeq \op{Rep}(\Z_2), \End(T) \simeq \op{1Vec}_{\Z_2}, \Hom(I,T) \simeq \op{1Vec}, \Hom(T,I) \simeq \op{1Vec}; \\
\End(I_s) \simeq \op{Rep}(\Z_2), \End(T_s) \simeq \op{1Vec}_{\Z_2}, \Hom(I_s,T_s) \simeq \op{1Vec}, \Hom(T_s,I_s) \simeq \op{1Vec}.
\end{align*}
We illustrate these structures in the following quiver:
\begin{align} \label{quiver1}
\xymatrix{
I \ar@(ul,ur)[]^{\op{Rep}(\Z_2)}  \ar@/^/[rr]^{\op{1Vec}} & & T \ar@(ul,ur)[]^{\op{1Vec}_{\Z_2}} \ar@/^/[ll]^{\op{1Vec}}
& & I_s \ar@(ul,ur)[]^{\op{Rep}(\Z_2)}  \ar@/^/[rr]^{\op{1Vec}} & & T_s \ar@(ul,ur)[]^{\op{1Vec}_{\Z_2}} \ar@/^/[ll]^{\op{1Vec}}
}
\end{align}

For the monoidal structure, $I$ is the unit, and we have
\begin{align*}
I_s \bt I_s \cong I, \quad I_s \bt T \cong T \bt I_s \cong T_s, \quad
T \bt T \cong T_s \bt T_s \cong T \bo T, \quad T \bt T_s \cong T_s \bt T \cong T_s \bo T_s.
\end{align*}
from Proposition \ref{prop tensor}.
The braiding is given by
$$\begin{array}{cccc}
R_{X,Y}: & XY & \ra & YX  \\
& (x,y) & \mapsto & (y, \rho_g(x)),
\end{array}$$
where $g=1$ for $Y=I,T$, $g=s$ for $Y=I_s,T_s$, and $\rho_g: X \ra X$ is the action of $G$ on $X$.
If $X=T, T_s$ and $Y=I_s, T_s$, then $R_{X,Y} \neq \Sigma_{X,Y}$; otherwise $R_{X,Y}=\Sigma_{X,Y}$ from (\ref{eq swap}).
\end{example}

\subsection{The unit component} \label{sec unit}
The unit component $\zv_c$ for $c=1$ is a braided monoidal sub-2-category of $\zv$.
In this case, $h=1, \cgh=G$ and $\tau_1(\om)$ is a coboundary for any $\om \in Z^4(G, \Ks)$.
If $\om$ is normalized, then $\tau_1(\om)=1$.
So $\rep(C_G(1), \tau_1(\om))$ is equivalent to the 2-category $\rep(G)$ of module categories over $\op{1Vec}_G$.
In particular, $\zv_1\simeq \rep(G)$ as braided monoidal 2-categories.

\begin{cor} \label{cor mod}
There is an inclusion $\rep(G) \hookrightarrow \zv$ of braided monoidal 2-categories for any $\om \in Z^4(G, \Ks)$.
\end{cor}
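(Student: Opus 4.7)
The plan is to identify $\rep(G)$ with the unit component $\zv_1$ of the decomposition in Theorem \ref{thm1} and then verify that $\zv_1$ is a braided monoidal sub-2-category of $\zv$ whose induced structure agrees with the standard symmetric monoidal one on $\rep(G)$.

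I would start by applying Theorem \ref{thm1} to the conjugacy class $c=[1]=\{1\}$. Since $C_G(1)=G$ and every summand in the transgression formula $\tau_1(\om)(x_1,x_2,x_3)=\prod_i\om(\dots,x_i,1,x_{i+1},\dots)^{(-1)^i}$ contains an entry equal to $1$, normalization of $\om$ forces $\tau_1(\om)=1$ on the nose. Hence the $h=1$ summand of Theorem \ref{thm1} is $\op{2Rep}(G,1)\simeq\rep(G)$, giving a fully faithful 2-functor $\rep(G)\xrightarrow{\sim}\zv_1\hookrightarrow\zv$.

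Next I would check closure under tensor product. By Proposition \ref{prop tensor}, $A(h,H,\psi)\bt A(h',H',\psi')\cong\bo_{t}A(h_t,H_t,\psi_t)$ with $h_t=t^{-1}hth'$. When $h=h'=1$, every $h_t=1$, so the tensor product of two objects of $\zv_1$ lies in $\zv_1$, and the unit $\ti I$ trivially lies there. For the cocycle twist in formula \ref{eq psit}, each factor $\psi_{ij,t}(x_1,x_2)=\om(\dots,x_i,h,\dots,x_j,h',\dots)$ has $h=1$ or $h'=1$ as an entry, so normalization makes it trivial. Thus $\psi_t=t^*(\psi)|_{H_t}\cdot\psi'|_{H_t}$, which is exactly the standard tensor product formula for module categories in $\rep(G)$. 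The associator $\ti a$ and the coherence modifications $\ti\pi,\ti\mu,\ti\lm,\ti\rho$ collapse similarly because every relevant evaluation of $\om$ has an argument equal to $1$.

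Finally I would check the braiding. By equation \ref{eq swap}, whenever $\ti B\in\zv_1$, the underlying 1-morphism of $R_{\ti A,\ti B}$ is the canonical permutation $\Sigma_{A,B}$, and by Remark \ref{rmk RAf} the naturality 2-isomorphisms $R_{A,f}$ are identities in this grading-$1$ regime. Hence the restricted braiding is the canonical symmetry of the Deligne tensor product, which is exactly the symmetric braiding of $\rep(G)$. The full braided monoidal axioms are then inherited automatically from Theorem \ref{thm def zb}. The only real obstacle is the bookkeeping in the previous paragraph: verifying that the cocycle twist $\psi_t$ together with the structure 1- and 2-morphisms all collapse to the untwisted ones on the unit component. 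Since every nontriviality in $\zv$ is controlled by $\om$ evaluated on group elements and normalization kills every evaluation involving $1$, this reduction is essentially automatic.
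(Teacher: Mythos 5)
Your proposal is correct and takes essentially the same route as the paper: Section \ref{sec unit} identifies the unit component $\zv_1$ with $\rep(G)$ using the triviality of $\tau_1(\om)$ for normalized $\om$, notes closure of the tensor product via Proposition \ref{prop tensor} with the $\om$-twists $\psi_{ij,t}$ killed by normalization, and observes via (\ref{eq swap}) and Remark \ref{rmk RAf} that the restricted braiding is the canonical symmetry. No gaps.
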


The 2-category $\rep(G)$ is well studied in \cite{Os2}.
More precisely, any indecomposable object of $\rep(G)$ is given by a pair $A=A(H,\psi)$, where $H < G$ and $\psi \in Z^2(H, \Ks)$.
The isomorphism class of $A(H,\psi)$ is determined by the conjugacy class of $H$ and the cohomological class $[\psi] \in H^2(H, \Ks)$.
There are two distinguished objects of $\zv_1$: one is the unit $I=A(H,\psi)$ for $H=G, \psi=1$; the other one is $T=A(H,\psi)$ for $H=1, \psi=1$.
As objects of $\rep(G)$, $I = \cv$ is the trivial representation, and $T = \vc_G$ is the regular representation of $\vc_G$.
The endomorphism 1-categories are
$$\End(I) \simeq \op{Rep}(G), \quad \End(T) \simeq \op{1Vec}_G.$$

For indecomposable objects $M, N$ of $\rep(G)$, bimodules $\op{Hom}_{\rep(G)}(M,N)$ and $\op{Hom}_{\rep(G)}(N,M)$ induces the Morita equivalence between $\op{End}_{\rep(G)}(M)$ and $\op{End}_{\rep(G)}(N)$.
Thus, $\rep(G)$ is the idempotent completion of the delooping of $\op{Rep}(G)$ in the sense of Douglas and Reutter \cite{DR}.
We illustrate these structures in the following quiver which is connected.
\begin{align} \label{quiver2}
\xymatrix{
 & & A(H,\psi) \ar@(ul,ur)[]^{\op{End}_{\rep(G)}(A(H,\psi))} \ar@/^/[drr]   \ar@/^/[dll]^<<<<<{\op{Rep}(H,\psi)}& & \\
A(G,1)=I \ar@(dl,dr)[]_{\op{Rep}(G)}  \ar@/^/[urr]^{\op{Rep}(H,\psi^{-1})}\ar@{<->}[rrrr]_{\op{1Vec}} & & & & T=A(1,1) \ar@(dl,dr)[]_{\op{1Vec}_{G}} \ar@/^/[ull]
}
\end{align}

It follows from Proposition \ref{prop tensor} that $$A(H,\psi) \bt A(H',\psi') \cong \bo_{t}A(H_t,\psi_t),$$
where the sum is over $t \in H \backslash G / H'$, $H_t=t^{-1}Ht \cap H'$, and $\psi_t=t^*(\psi)|_{H_t}\cdot \psi'|_{H_t}$ from (\ref{eq psit}) since $\om$ is normalized.
In particular, $A(H,\psi) \bt T \cong T\bt A(H,\psi) \cong T^{\bo H \backslash G}$.
Moreover, the monoidal structure is strictly associative since the invertible modifications in Diagram (\ref{def ABCX}) are all identities.

The braiding $R_{\ti{A}, \ti{B}}=(R_{A, B}, R_{R_{A, B}, -}): \ti{A} \ti{B} \ra \ti{B} \ti{A}$, where $R_{A,B}=R_{A,-}(B)=\Sigma_{A,B}: AB \ra BA$ from (\ref{eq swap}) since $B$ is concentrated in grading 1, and the invertible modification $R_{R_{A, B}, -}$ in Diagram (\ref{def ABX}) is the identity.

The invertible modifications $\tr_{(\ti{A}|\ti{B}, \ti{C})}, \tr_{(\ti{A},\ti{B} | \ti{C})}$ are all identities.

\subsection{The sylleptic center} \label{sec muger}
We briefly discuss the sylleptic center of $\rep(G)$ and $\zv$.
Crans gave a definition of the sylleptic center of a braided monoidal 2-category in the semistrict case \cite[Section 5.1]{Cr}.
We need a weak version. We propose the following definition without checking the coherence.

\begin{defn} \label{def:m-center}
Let $\C$ be a braided monoidal 2-category, and let $\D$ be a full monoidal sub-2-category. The {\em sylleptic centralizer} of $\D$ in $\C$, denoted by $\mathrm{Z}_\C(\D)$, is a 2-category defined as follows:
\begin{enumerate}
\item An object in $\mathrm{Z}_\C(\D)$ is a pair $(A, v_{A,-})$, where $A$ is an object of $\C$, and $v_{A,-}$ is an invertible modification
\begin{equation} \label{diag:sylleptic}
\xymatrix{
AX \ar@{=}[rr] \ar[dr]_{R_{A,X}} & \ar@{}[d]|{\Downarrow v_{A,X}} & AX \\
& XA \ar[ur]_{R_{X,A}} &
}
\end{equation}
for all $X\in\D$ such that the following axiom holds for all $X,Y\in\D$:
\begin{equation} \label{def Muger}
\resizebox{\displaywidth}{!}{\xymatrix{
(AX)Y \ar@{=}[rr] \ar[d]_{R_{A,X}} & \ar@{}[d]|{\Downarrow v_{A,X}} & (AX)Y  & & (AX)Y \ar[dr]_{a} \ar@{=}[rrrr] \ar[d]_{R_{A,X}} & & & & (AX)Y\\
(XA)Y \ar@{=}[rr] \ar[d]_{a} & & (XA)Y \ar[u]_{R_{X,A}} & = & (XA)Y  \ar[d]_{a} & A(XY) \ar@{=}[rr] \ar[d]^{R_{A,XY}}& \ar@{}[d]|{\Downarrow v_{A,XY}} & A(XY) \ar[ur]_{a^*} & (XA)Y \ar[u]_{R_{X,A}} \\
X(AY) \ar@{=}[rr]  \ar[dr]_{R_{A,Y}} & \ar@{}[d]|{\Downarrow v_{A,Y}} & X(AY) \ar[u]_{a^*} & & X(AY)  \ar@{}[ur]|{\Rightarrow R_{(A|X,Y)}} \ar[drr]_{R_{A,Y}} & (XY)A \ar[dr]^{a} \ar@{=}[rr] &  & (XY)A \ar[u]^{R_{XY,A}}  \ar@{}[ur]|{\Leftarrow R_{(X,Y|A)}} & X(AY) \ar[u]_{a^*} \\
& X(YA) \ar[ur]_{R_{Y,A}} & & & & & X(YA) \ar[urr]_{R_{Y,A}} \ar[ur]^{a^*} & &
}}\end{equation}

Note that this is an equality between two 2-morphisms, each of which is a composition of 2-morphisms defined by above two diagrams.

\item A 1-morphism from $(A, v_{A,-})$ to $(A', v_{A',-})$ is a 1-morphism $f: A \rightarrow A'$ in $\C$ such that the following diagram commutes:
\begin{equation} \label{def Muger 1-mor}
\xymatrix{
A'X \ar@{=}[rr] \ar[dr]_{R_{A',X}} & \ar@{}[d]|{\Downarrow v_{A',X}} & A'X \\
& XA' \ar[ur]_{R_{X,A'}} & \\
AX \ar@{=}[rr] \ar[dr]_{R_{A,X}} \ar[uu] \ar@{}[ur]|{\Rightarrow R_{f,X}} & \ar@{}[d]|{\Downarrow v_{A,X}} & AX \ar[uu] \ar@{}[ul]|{\Rightarrow R_{X,f}} \\
& XA \ar[ur]_{R_{X,A}} \ar[uu] &
}\end{equation}
where all vertical arrows are induced by $f$, and the 2-isomorphism in the back is the identity 2-isomorphism.
\item A 2-morphism is defined in the same way as in $\C$.
\end{enumerate}
When $\D=\C$, $\mathrm{Z}_\C(\C)$ is called the {\em sylleptic center} of $\C$.
\end{defn}

The monoidal, the braiding and the syllepsis structures on $\mathrm{Z}_\C(\D)$ can be generalized from Crans' definition in a similar way. We omit the detail here.

\begin{prop} \label{prop Muger repG}
The sylleptic center of $\rep(G)$ is equivalent to $\rep(G)$ as 2-categories.
\end{prop}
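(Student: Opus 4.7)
The plan is to exhibit the canonical $2$-functor
\[
\Theta: \rep(G) \longrightarrow \mathrm{Z}_{\rep(G)}(\rep(G)), \qquad A \longmapsto (A, v^0_{A,-}),
\]
with $v^0_{A,X} := \mathrm{id}_{\mathrm{id}_{AX}}$, acting as the identity on $1$- and $2$-morphisms (equipped with the canonical swap-naturality data), and then to show that $\Theta$ is an equivalence of $2$-categories by a forgetful argument.

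First I would verify that $\Theta$ is well defined on objects. By Equation (\ref{eq swap}), for $A, X \in \rep(G)$ the half-braiding $R_{A,X}$ coincides with the strict permutation $\Sigma_{A,X}$, and $R_{X,A}\circ R_{A,X} = \mathrm{id}_{AX}$ holds on the nose; hence $v^0$ has the correct domain and codomain in Diagram (\ref{diag:sylleptic}). The compatibility axiom (\ref{def Muger}) then collapses: as recorded in Section \ref{sec unit}, the modifications $\tr_{(\ti{A}|\ti{B},\ti{C})}$ and $\tr_{(\ti{A},\ti{B}|\ti{C})}$ are identities on the unit component of $\zv$, the associators and unit constraints are all trivial, and with $v^0 = \mathrm{id}$ both sides of (\ref{def Muger}) reduce to the identity $2$-cell on $\mathrm{id}_{(AX)Y}$.

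Next I would check fully-faithfulness of $\Theta$. On $2$-morphisms, Definition \ref{def:m-center} adds no extra constraint, so the check is immediate. On $1$-morphisms, for $f: A \to A'$ in $\rep(G)$ the condition (\ref{def Muger 1-mor}) with $v^0 = \mathrm{id}$ on both faces reduces, using $\Sigma^2 = \mathrm{id}$ and Remark \ref{rmk RAf}, to the canonical naturality of $\Sigma$; this uniquely determines $R_{f,X}$, so $1$-morphisms of $\mathrm{Z}_{\rep(G)}(\rep(G))$ between objects in the image of $\Theta$ correspond bijectively to $1$-morphisms of $\rep(G)$.

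The main obstacle is essential surjectivity: given an arbitrary object $(A, v_{A,-})$, produce a $1$-isomorphism in the sylleptic center to $(A, v^0)$. In the unit component the compatibility (\ref{def Muger}), simplified by the vanishing of the $\tr$-modifications, asserts that $X \mapsto v_{A,X} \in \mathrm{Aut}(\mathrm{id}_{AX})$ is a tensor-natural automorphism of the endofunctor $A \bt (-)$. Because $\rep(G)$ is semisimple with $\mathrm{End}(\mathrm{id}_I) = \Ks$, such tensor-natural autos are classified by invertible elements of $\mathrm{End}_{\rep(G)}(A)$ via a Schur-type argument. Choosing an invertible $\phi_A \in \mathrm{End}_{\rep(G)}(A)$ whose canonical swap-naturality with $X$ equals $v_{A,X}$ produces a $1$-isomorphism $(\phi_A, R_{\phi_A,-}): (A, v^0) \xrightarrow{\sim} (A, v_{A,-})$ satisfying (\ref{def Muger 1-mor}), which yields the required essential surjectivity and completes the equivalence.
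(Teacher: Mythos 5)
Your setup is sound where it overlaps with the paper: the functor $\Theta$ sending $A$ to $(A,v^0_{A,-})$ with $v^0=id$ is well defined because the braiding on the unit component is the strict swap and the modifications $\tr_{(\ti{A}|\ti{B},\ti{C})}$, $\tr_{(\ti{A},\ti{B}|\ti{C})}$ are identities there, so axiom (\ref{def Muger}) collapses; and full faithfulness holds because in condition (\ref{def Muger 1-mor}) the cells $R_{f,X}$ and $R_{X,f}$ are identities for everything concentrated in grading $1$ (one small correction: a $1$-morphism of the sylleptic center is a $1$-morphism of $\rep(G)$ \emph{satisfying} (\ref{def Muger 1-mor}), not one for which $R_{f,X}$ gets \emph{determined}; the condition is simply vacuous here).

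The genuine gap is in essential surjectivity, which is the heart of the statement. Your ``Schur-type'' claim that tensor-natural automorphisms of $A\bt(-)$ are classified by invertible elements of $\End_{\rep(G)}(A)$ is unjustified and sits at the wrong categorical level: $v_{A,X}$ is an automorphism of the $1$-morphism $id_{AX}$ (a $2$-cell), whereas an invertible element of $\End_{\rep(G)}(A)$ is a $1$-automorphism $\phi_A$ of $A$, and the ``canonical swap-naturality'' of any such $\phi_A$ is the identity $2$-cell by Eq.\,(\ref{eq swap}) and Remark \ref{rmk RAf}. Consequently condition (\ref{def Muger 1-mor}) for $(\phi_A,\text{canonical data}):(A,v^0)\to(A,v_{A,-})$ forces $v_{A,X}=id$, so your construction only reaches objects whose syllepsis is already trivial --- which is precisely what remains to be proved. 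The paper closes exactly this gap by showing that every $v_{A,-}$ \emph{is} forced to be trivial: taking $X=Y=I$ in (\ref{def Muger}) yields $v_{A,I}^2=v_{A,I}$, hence $v_{A,I}=id$; then, since $\rep(G)$ is connected (it is the idempotent completion of the delooping of the fusion category $\op{1Rep}(G)$, so every object admits a nonzero $1$-morphism $f:I\to X$), the naturality of the modification $v_{A,-}$ along $f$, together with $R_{A,f}=R_{f,A}=id$, propagates $v_{A,I}=id$ to $v_{A,X}=id$ for all $X$. Both ingredients --- the idempotency argument at the unit and the connectedness/naturality propagation --- are missing from your proposal, and without them the equivalence is not established.
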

\begin{proof}
Let $(A, v_{A,-})$ be an object of the sylleptic center of $\rep(G)$.
The braiding of $\rep(G)$ is symmetric, i.e. $R_{X, A} \circ R_{A, X}=id_{A X}$ for any $X$.
We prove that the modification $v_{A, X}=id_{id_{AX}}$ as follows.
Taking $X=Y=I$ in the axiom (\ref{def Muger}) gives $v_{A,I}^2=v_{A,I}$ since $R_{(A|X,Y)}, R_{(X,Y|A)}$ are identities.
It follows that $v_{A,I}$ is the identity.
The semisimple 2-category $\rep(G)$ is equivalent to the module category of $\op{1Rep}(G)$, i.e. the idempotent completion of the delooping of $\op{1Rep}(G)$. The 2-category $\rep(G)$ has only one connected component since $\op{1Rep}(G)$ is fusion, see \cite[Remark 2.1.22]{DR}.
In other words, there exists a nontrivial 1-morphism $f:I \ra X$ for any object $X$ of $\rep(G)$.
The naturality of $v_{A,-}$ associated to $f$ is described by the following diagram:
$$\xymatrix{
AX \ar@{=}[rr] \ar[dr] & \ar@{}[d]|{\Downarrow v_{A,X}} & AX \\
& XA \ar[ur] \ar@{}[dl]|{\Rightarrow R_{A,f}} \ar@{}[dr]|{\Rightarrow R_{f,A}} & \\
AI \ar@{=}[rr] \ar[dr] \ar[uu]^{f} & \ar@{}[d]|{\Downarrow v_{A,I}} & AI \ar[uu]^{f} \\
& IA. \ar[ur] \ar[uu]^>>>>>>{f} &
}$$
Then $R_{f,A}$ is the identity since $A$ is concentrated in the grading $1$, and $R_{A,f}$ is the identity from Remark \ref{rmk RAf}.
It follows that $v_{A, X}=id_{id_{AX}}$.
Therefore, an object $(A, v_{A,-})$ of the sylleptic center of $\rep(G)$ is completely determined by $A$ as an object of $\rep(G)$.
For 1-morphism from $(A, v_{A,-})$ to $(A', v_{A',-})$, any 1-morphism $f: A \ra A'$ in $\rep(G)$ satisfies Diagram (\ref{def Muger 1-mor}) since all 2-isomorphisms are identities there.
\end{proof}

\begin{rmk}
The 2-category $\rep(G)$ has a natural syllepsis structure viewed as the sylleptic center of $\rep(G)$.
This syllepsis structure is {\em symmetric} in the sense of Crans \cite{Cr}, i.e. $id_{R_{X,Y}} \cdot v_{X,Y}=v_{Y,X} \cdot id_{R_{X,Y}}$ as 2-morphisms from $R_{X,Y} \circ R_{Y,X} \circ R_{X,Y}$ to $R_{X,Y}$.
As a result, $\rep(G)$ is an $E_4$ algebra.
\end{rmk}

\begin{thm} \label{thm Muger}
The sylleptic center of $\zv$ is equivalent to $\ve$ as 2-categories.
\end{thm}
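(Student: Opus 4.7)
The plan is to combine the argument of Proposition \ref{prop Muger repG}, which handled the unit component $\rep(G)\subset\zv$, with the additional constraints obtained by testing an object $A\in\zv$ against $X$ in non-unit graded components. Because $\zv$ is semisimple and the sylleptic-center property descends to direct summands, it suffices to classify the indecomposables $A(h,H,\psi)$ admitting a sylleptic-center structure $v_{A,-}$ and then analyze the Hom-categories. The first step is to eliminate every grading $h\neq 1$: given such a summand take $X = T = A(1,1,1)$, whose only nonzero graded piece $T_1\simeq \cv(G)$ carries the right-multiplication action of $G$. By the braiding formula (\ref{eq braid}) the double braiding $R_{X,A}\circ R_{A,X}$ restricted to $A_{h_i}\bt T_1$ equals $id_{A_{h_i}}\bt \rho_{h_i}^T$, where $\rho_{h_i}^T$ is right multiplication by $h_i$ on $\cv(G)$. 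For $h_i\neq 1$ this permutes simple summands nontrivially and is not $1$-isomorphic to the identity, so no modification $v_{A,T}$ can exist. Hence $h = 1$ and $A\in\rep(G)\subset\zv$.

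Next I would shrink $H$ to all of $G$ by testing with $X = T_k = A(k,1,1)$ for every $k\in G$. Using (\ref{eq braid}) the double braiding on $A_1\bt (T_k)_k$ equals $\rho_k^A\bt id$, where $\rho_k^A$ is right multiplication by $k$ on $A_1\simeq \cv(H\backslash G)$. A $2$-isomorphism $\rho_k^A\cong id_{A_1}$ forces every coset $Hs$ to be fixed by right multiplication by $k$, hence $k\in\bigcap_{s\in G}s^{-1}Hs$; running this over all $k\in G$ yields $H = G$. Thus $A\simeq A(1,G,\psi)$ for some $\psi\in Z^2(G,\Ks)$; in particular $A_1\simeq\cv$, every $\rho_k^A$ is the identity functor, and the half-braiding modification reduces to the scalar $m(k,k') = \psi(k,k')\in\Ks$. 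Then I would invoke the axiom (\ref{def Muger}) with $X = T_k$, $Y = T_{k'}$: $R_{(X,Y|A)}$ is the identity by (\ref{def R(A,B|C)}) since associators in $\vgw$ are strict, whereas $R_{(A|X,Y)}$ evaluates to the scalar $\psi(k,k')$, so the axiom collapses to the $2$-cocycle identity $v(k)\,v(k') = \psi(k,k')\,v(kk')$ on $v(k) := v_{A,T_k}$. Solvability forces $[\psi] = 0$ in $H^2(G,\Ks)$, hence $A\simeq I$. Finally the residual freedom in $v$ is a homomorphism $G\to\Ks$, which is killed by the naturality square (\ref{def Muger 1-mor}) applied against a nontrivial character $f\in\op{Rep}(G) = \End_{\zv}(I)$; the same naturality condition, read as a constraint on morphisms, restricts $1$-morphisms of the sylleptic center between copies of $I$ to lie in $\cv\subset\op{Rep}(G)$, and on $2$-morphisms the argument of Proposition \ref{prop Muger repG} applied to the unit component takes over. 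The result is a 2-category with one simple object of endomorphism 1-category $\cv$, i.e.\ equivalent to $\ve$.

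The main obstacle is the bookkeeping in the last step: unwinding the hexagonal axiom (\ref{def Muger}) with the conventions of (\ref{def ABXY}) and (\ref{def R(A,B|C)}) so that the cocycle identity $v(k)v(k') = \psi(k,k')v(kk')$ emerges with the correct sign, and verifying that testing only against the family $\{T_k\}_{k\in G}$ is sufficient: once $v$ has been pinned down on the $T_k$'s, compatibility of $v_{A,X}$ for an arbitrary simple $X = A(k,K,\phi)\in\zv$ should follow from the axiom applied to tensor decompositions of $X$, but this deserves explicit verification. A secondary point is to check that the sylleptic-center data genuinely decomposes over direct summands of $A$, so that the reduction to indecomposable $A(h,H,\psi)$ at the start of the argument is legitimate.
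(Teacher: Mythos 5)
Your proposal is correct and follows essentially the same route as the paper's proof: testing against the regular object $T=A(1,1,1)$ to force trivial grading, against the shifted regular objects $T_k$ to force $H=G$, then extracting the $2$-cocycle identity $v(k)v(k')=\psi(k,k')v(kk')$ from axiom (\ref{def Muger}) to conclude $[\psi]=0$ and $A\simeq I$, and finally using (\ref{def Muger 1-mor}) to identify the different sylleptic structures on $I$ and to cut $\End(I)\simeq\op{Rep}(G)$ down to $\cv$. The caveats you flag (sufficiency of testing against the family $\{T_k\}$, and the decomposition of the sylleptic data over direct summands) are likewise left implicit in the paper's argument.
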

\begin{proof}
Let $(\ta, v_{\ta,-})$ be an indecomposable object of the sylleptic center of $\zv$, where $v_{\ta, \ti{X}}: id_{\ti{A} \ti{X}} \Rightarrow R_{\ti{X}, \ti{A}} \circ R_{\ti{A}, \ti{X}}$ gives an isomorphism between the identity and the double braiding.

Take $\ti{X}=T=A(h, H,\psi)$ for $h=1, H=1, \psi=1$.
The half braiding $R_{\ti{A}, T}=\Sigma_{\ti{A}, T}$ from (\ref{eq swap}) since $T$ is concentrated in grading $1$.
So the other half braiding $R_{T, \ti{A}}=\Sigma_{T,\ti{A}}$.
It follows from (\ref{eq braid}) that $\ti{A}$ is concentrated in the grading $1$ since $T$ is the regular representation in $\rep(G)$.
So $\ta$ is an object of $\zv_1 \simeq \rep(G)$.

For any $\ti{X}$, the half braiding $R_{\ti{X}, \ti{A}}=\Sigma_{\ti{X},\ti{A}}$ which implies that $R_{\ti{A}, \ti{X}}=\Sigma_{\ti{A}, \ti{X}}$.
Then $\ti{A}$ has to be the trivial representation in $\rep(G)$ by taking $\ti{X}=T_h=A(h, H,\psi)$ for $h \in G, H=1, \psi=1$.
We have $\ta=A(1, G,\psi)$, where $\psi \in Z^2(G, \Ks)$ is determined by $R_{(\ti{A}|\ti{X},\ti{Y})}$.
Taking $\ti{X}=T_h, \ti{Y}=T_{h'}$, the axiom in (\ref{def Muger}) gives
$$R_{(\ti{A}|\ti{X},\ti{Y})} \cdot v_{\ta, \ti{X}\ti{Y}}=v_{\ta, \ti{X}} \cdot v_{\ta, \ti{Y}},$$
since $R_{(\ti{X},\ti{Y}|\ti{A})}$ is the identity.
This implies that $\psi=d \gamma$, where $\gamma \in Z^1(G, \Ks)$ is a 1-cochain determined by $v_{\ta, T_h}$.
Therefore, the underlying object $\ta$ of $(\ta, v_{\ta,-})$ is isomorphic to the unit $I$ in $\zv$.

We next show that $I_1=(I, v_{I,-})$ and $I_2=(I, v'_{I,-})$ are isomorphic to each other.
Define a 1-morphism $(f, R_{f,-}): I_1 \ra I_2$, where $f=id_I$ and $R_{f,\ti{X}}=v_{I,\ti{X}} \cdot {v'_{I,\ti{X}}}^{-1}$.
It is easy to check that $(f, R_{f,-})$ is well-defined and gives an isomorphism.
So up to isomorphism there is only one indecomposable object $I_0=(I, v_{I,-}), v_{I,\ti{X}}=id_{id_{\ti{X}}}$ in the Sylleptic center.

We finally compute $\End(I_0)$.
Let $f: I \ra I$ be a 1-morphism in $\zv$, i.e. $f \in \End(I) \simeq \op{Rep}(G)$.
It follows from (\ref{def Muger 1-mor}) that $R_{f,\ti{X}}$ is the identity for any $\ti{X}$ since $R_{\ti{X},f}$ is the identity.
So $f$ has to be the trivial representation in $\op{Rep}(G)$ by taking $\ti{X}=T_h$ as above.
We conclude that $\End(I_0) \simeq \vc$.
\end{proof}

Theorem\,\ref{thm Muger} is consistent with the expectation that $\zv$ should be an example of the yet-to-be-defined notion of a {\em unitary modular tensor 2-category}.
Similar to Definition\,\ref{def:m-center}, the notion of the relative sylleptic center of a full subcategory of a braided monoidal 2-category can be defined.
A combination of the proofs of Proposition \ref{prop Muger repG} and Theorem \ref{thm Muger} shows that the sylleptic centralizer of $\rep(G)$ in $\zv$ is equivalent to $\rep(G)$. A unitary modular tensor 2-category $\mathcal{C}$, equipped with a braided monoidal fully faithful embedding $\rep(G)\hookrightarrow \mathcal{C}$, is called a {\it modular extension} of $\rep(G)$. Such a modular extension is called {\it minimal} if the relative sylleptic center of $\rep(G)$ in $\mathcal{C}$ is braided monoidally equivalent to $\rep(G)$. By  Corollary \ref{cor mod}, Proposition \ref{prop Muger repG} and Theorem \ref{thm Muger}, $\zv$ is precisely a minimal modular extension of $\rep(G)$ for $\omega \in Z^4(G, \Ks)$. Motivated by the classification theory of 2+1D symmetry protect topological orders \cite{lkw2} and its 3+1D analogue \cite{cglw,LKW}, we propose the following conjecture.

\begin{conj}
The equivalence classes of minimal modular extensions of $\rep(G)$ are classified by $H^4(G, \Ks)$.
\end{conj}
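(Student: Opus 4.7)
The plan is to establish that the assignment $[\omega] \mapsto \zv$ furnishes a bijection between $H^4(G,\Ks)$ and equivalence classes of minimal modular extensions of $\rep(G)$, generalizing the well-known classical result that minimal modular extensions of the braided fusion $1$-category $\operatorname{1Rep}(G)$ are classified by $H^3(G,\Ks)$. The proof would proceed in three steps: well-definedness, injectivity, and surjectivity of this map.

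First, for well-definedness, I would show that cohomologous $\omega_1,\omega_2 \in Z^4(G,\Ks)$ yield braided monoidally equivalent centers compatibly with the $\rep(G)$-embedding. If $\omega_2 = \omega_1 \cdot d\lambda$ for some $\lambda \in C^3(G,\Ks)$, then $\lambda$ induces a strong monoidal $2$-equivalence $\operatorname{2Vec}_G^{\omega_1} \simeq \operatorname{2Vec}_G^{\omega_2}$ by twisting the invertible modification $\pi$; this induces a braided monoidal $2$-equivalence of centers by naturality of the construction in Theorem \ref{thm def zb}, and the induced equivalence preserves the canonical embedding of $\rep(G)$ from Corollary \ref{cor mod}.

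Second, for injectivity, I would recover the cohomology class $[\omega]$ from the braided monoidal $2$-category $\zv$ together with its $\rep(G)$-embedding. The strategy is to isolate the family of \emph{invertible} objects $T_h = A(h,1,1)$ (a 2-categorical lift of the regular representation, graded by $h\in G$), and to read off $\omega$ as the invariant governing the restriction of $\tilde{\pi}$ to $T_{x}T_{y}T_{z}T_{w}$. The 2-categorical gauge freedom (re-choice of 1-associators and unit constraints among the $T_h$) corresponds precisely to coboundaries $d\lambda$ for $\lambda\in C^3(G,\Ks)$, so the class $[\omega] \in H^4(G,\Ks)$ is recovered canonically; Example \ref{ex Z2Z2} already indicates that cohomologically distinct $\omega$ are even detectable on the level of the underlying $2$-category.

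Third, for surjectivity, given a minimal modular extension $\mathcal{C}$ of $\rep(G)$, I would construct an $\omega \in Z^4(G,\Ks)$ and an equivalence $\mathcal{C} \simeq \zv$ of braided monoidal $2$-categories under $\rep(G)$. The strategy is $2$-categorical de-equivariantization: the braided embedding $\rep(G)\hookrightarrow \mathcal{C}$ endows $\mathcal{C}$ with a $G$-crossed braided $2$-categorical structure over its de-equivariantization $\mathcal{C}_G$. Minimality, i.e.\ that the relative sylleptic centralizer of $\rep(G)$ in $\mathcal{C}$ equals $\rep(G)$, forces $\mathcal{C}_G \simeq \ve$ in parallel with the argument of Theorem \ref{thm Muger}. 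A $G$-crossed braided $2$-categorical extension of $\ve$ is then parametrized up to equivalence by the associator data of the $G$-graded invertible objects, which is precisely a normalized $4$-cocycle $\omega$; re-equivariantizing reconstructs $\zv$.

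The main obstacle is surjectivity. It requires a $2$-categorical analogue of the Etingof--Nikshych--Ostrik theory of $G$-crossed braided extensions (including the de-equivariantization/equivariantization correspondence, the Postnikov-type obstruction sequence, and the identification of the top obstruction and classifying datum in $H^4(G,\Ks)$ rather than $H^3(G,\Ks)$). A fully rigorous proof must await a complete theory of fusion $2$-categories and their $G$-equivariantization in the sense of \cite{DR}; however, the way $\omega$ enters Diagram \eqref{def AXYZ} and controls both the objects of $\zv$ and the invertible modification $\tilde{\pi}$ strongly suggests that this higher Brauer--Picard analysis will produce exactly $H^4(G,\Ks)$ as the classifying group.
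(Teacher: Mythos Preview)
The statement you are attempting to prove is explicitly labeled a \emph{Conjecture} in the paper, and the paper offers no proof whatsoever: it is presented as a motivated expectation following Corollary~\ref{cor mod}, Proposition~\ref{prop Muger repG}, and Theorem~\ref{thm Muger}, by analogy with the 2+1D classification in \cite{lkw2}. There is therefore no ``paper's own proof'' to compare against.

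Your proposal is a plausible strategic outline, and you are candid that the surjectivity step is not rigorous. But you should also be aware that the other two steps are not settled either. For well-definedness, the claim that a monoidal 2-equivalence $\operatorname{2Vec}_G^{\omega_1}\simeq\operatorname{2Vec}_G^{\omega_2}$ induces a braided monoidal 2-equivalence of centers \emph{respecting the $\rep(G)$-embedding} is reasonable but requires a functoriality statement for the center construction of Section~\ref{sec:def-center} that the paper does not establish. For injectivity, your idea of reading off $[\omega]$ from $\tilde\pi$ on the objects $T_h$ is natural, but $T_h=A(h,1,1)$ is not an invertible object (it has $\End(T_h)\simeq\operatorname{1Vec}_G$, not $\operatorname{1Vec}$), so the mechanism by which the ``2-categorical gauge freedom'' matches exactly coboundaries in $C^3(G,\Ks)$ needs a genuine argument; Example~\ref{ex Z2Z2} shows only that \emph{some} classes are distinguished at the 2-category level, not that all are. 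In short, your three-step program is the expected shape of a proof, but each step presently rests on 2-categorical infrastructure (functoriality of centers, 2-categorical de/equivariantization, a higher ENO theory) that neither you nor the paper supplies.
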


\end{document}